\documentclass{article}

\usepackage{microtype}
\usepackage{graphicx}
\usepackage{subcaption}
\usepackage{booktabs} 

\usepackage{hyperref}

\usepackage[preprint]{icml2026}

\usepackage{amsmath}
\usepackage{amssymb}
\usepackage{mathtools}
\usepackage{amsthm}
\usepackage{tikz-cd}
\usepackage{bm}

\usepackage[capitalize,noabbrev]{cleveref}

\theoremstyle{plain}
\newtheorem{theorem}{Theorem}[section]
\newtheorem{proposition}[theorem]{Proposition}
\newtheorem{lemma}[theorem]{Lemma}
\newtheorem{corollary}[theorem]{Corollary}
\newtheorem{conjecture}[theorem]{Conjecture}
\theoremstyle{definition}
\newtheorem{definition}[theorem]{Definition}

\theoremstyle{remark}
\newtheorem{remark}[theorem]{Remark}

\usepackage[textsize=tiny]{todonotes}

\icmltitlerunning{Constructive Bounds for Topological Expressivity}

\begin{document}

\twocolumn[
  \icmltitle{Constructive Bounds for the Topological Expressivity of Neural Networks} 



  \icmlsetsymbol{equal}{*}

  \begin{icmlauthorlist}
    \icmlauthor{Kosio Beshkov}{yyy}
  \end{icmlauthorlist}

  \icmlaffiliation{yyy}{Department of Physics, University of Oslo, Oslo, Norway}

  \icmlcorrespondingauthor{Kosio Beshkov}{kosio.neuro@gmail.com}

  \icmlkeywords{Machine Learning, ICML}

  \vskip 0.3in
]



\printAffiliationsAndNotice{}  

\begin{abstract}
  The expressivity of a neural network $\Phi$ describes the class of functions that a network can express. The term \textit{topological expressivity} refers to a family of measures particularly suited to understand binary classification and is often formulated in terms of the Betti numbers of a decision boundary $\beta (\Phi^{-1}(0))$. Despite its popularity, this measure is not as useful for regression and other problems that do not involve a decision boundary. Instead, we propose to study topological expressivity in terms of the Betti numbers of the output of a neural network $\beta(\Phi(X))$. We formally prove that a ReLU network with architecture $(d,\sum\limits_{k=1}^{d
  } \beta_k(3k+2),\sum\limits_{k=1}^{d}\beta_k(k+1))$ can express spaces homotopy equivalent to wedge sums of spheres with arbitrary Betti numbers. This gives upper bounds for any arbitrary connected ($\beta_0=1$) sequence of Betti numbers, but by constructing spaces homotopy equivalent to products of $N$ spheres of dimension $d_1,...,d_N$ we achieve much better bounds $(\sum\limits_{n=1} d_n, 3\sum\limits_{n=1}\limits^N d_n + 2N, \sum\limits_{n=1}\limits^N d_n + N) $ for Betti numbers given by $\prod_n (1+x^{d_n})$. Finally we conjecture a much stronger tight bound which gives the architectural bound $(d,d+1,d+1)$ for a space homeomorphic to the sphere.
\end{abstract}

\section{Introduction}
    One of the fundamental problems in machine learning is to determine what model one needs to perform a given task or to learn a given function. Once such a model is selected it fixes a corresponding class of functions that it can implement. The nature of this class is usually described by a vast number of different measures of model expressivity.

    In the context of neural networks there are several universal approximation theorems \cite{cybenko1989approximation, hornik_approximation_1991, arora2018understanding} that guarantee that there is some network that will perform the task we are interested in. What then remains to be answered is not if such a network exists, but rather which networks are capable of completing the task and how does the class of functions that a network can express depend on its architecture. There are several measures of expressivity that attempt to illuminate this problem. Some essential examples are Vapnik-Chervonenkis dimension \cite{baum1988size}, Rademacher complexity \cite{bartlett2002rademacher}, the number of linear regions in networks with piecewise-linear (PWL) activations \cite{montufar2014number}, the length of the output trajectory \cite{raghu2017expressive} and the topological complexity of decision boundaries \cite{bianchini_complexity_2014}.

     The last example of expressivity, namely that of topological expressivity, is particularly relevant for the present work. Given a neural network $\Phi:\mathbb{R}^d\to\mathbb{R}$ its topological expressivity is defined as $\beta(\Phi^{-1}([0,\infty)))$, where $\beta$ denotes the vector of Betti numbers and $\Phi^{-1}([0,\infty))$ is one of two decision regions. Conceptually, this measures the complexity of the decision boundary separating two classes in the context of binary classification. 

    Several papers have pushed this idea further. Two examples in which the authors empirically explore which architectures are capable of expressing particular decision boundaries are \cite{guss_characterizing_2018, petri_topological_2020}. On the more theoretical side, \cite{grigsby_transversality_2021} prove that networks with one hidden layer of dimension $d+1$ have decision regions with at most one bounded connected component and \cite{ergen2024topological} constructively derive both upper and lower bounds on the Betti numbers of the decision regions. 

    While all measures of expressivity have their advantages, it is important to highlight that most of them are especially suited for classification problems and are therefore harder to understand when working with regression tasks. This is clearly the case when looking at the topology of the decision boundary as in problems that are not about classification, the notion of a decision boundary is undefined. Furthermore, as illustrated by \cite{petri_topological_2020}, it is possible to have data that is highly topologically complex but still separable by a hyperplane (imagine two tori separated by a hyperplane). This means that this notion of topological complexity ignores the topology of the data and is not applicable to our understanding of the plethora of topological transformations that a network needs to perform in non-classification tasks.

    A different set of works studies the topology of neural representations, by which we mean $\beta(\Phi(X))$, where $X$ is some dataset. For example \citet{naitzat2020topology} use persistent homology \cite{carlsson_topology_2009} to look at how topology evolves through the hidden layers of a network, \citet{wheeler2021activation} use persistence landscapes to explore the same question, \citet{beshkov2025relative} uses quotient homology to study how neural networks can glue data and generate non-trivial topologies and \citet{ren2026low} studies how low-dimensional topological invariants are transformed through the layers of networks. We will use the term \textit{representational topological expressivity} to talk about the Betti numbers of the image of a network $\beta(\Phi(X))$. This measure takes both the data as well as the task into account. It therefore aligns much better with non-classification tasks which are abundant in many scientific fields. Let us list some examples of such tasks to highlight its importance.
    \begin{itemize}
        \item \textbf{Regression tasks} require one to find a map $f:X\to \mathbb{R}^m$ and are used in protein structure prediction \cite{abramson2024accurate}, weather forecasting \cite{hewage2021deep}, predicting neural population activity \cite{yamins2014performance} and 3D pose estimation \cite{zheng2023deep}.
        \item \textbf{Density estimation tasks} require one to fit a distribution using a map $f:X \to \mathbb{R}^+$. This type of tasks is the main motivation behind methods such as normalizing flows \cite{papamakarios2021normalizing}, latent densities in variational autoencoders \cite{kingma2013auto} and some diffusion models \cite{gao2023implicit}.
        \item \textbf{Representation learning} requires one to optimize the representation of data so that it can be used to solve other tasks more efficiently. Examples of this are learning good word embeddings \cite{mikolov2013efficient}, protein embeddings \cite{hayes2025simulating} and finding low dimensional latent spaces for the generation of neural activity \cite{pei2021neural}.
        \item \textbf{Differential equation tasks} require that one find a way to map states to a function. Examples of this are physics informed neural networks \cite{raissi2017physics}, neural ODEs \cite{chen2018neural} and neural operators \cite{li2020fourier}. 
    \end{itemize} 

 This is not nearly an exhaustive list and it should be clear that our current notion of topological expressivity cannot be applied in the context of these tasks. Therefore, it is important to extend our theory to match the needs of machine learning practitioners.

    \subsection{Our contributions}
    The previously mentioned studies of the topology of neural representations are empirical or methodological in nature and do not explicitly relate their results to any notion of expressivity of a network. In this paper we provide, what is to our knowledge, the first architectural bounds on the topology of the representations of ReLU neural networks. We attempt to classify networks by the Betti numbers that they can generate given that the data comes from some contractible compact set in $\mathbb{R}^d$. Instead of evaluating the Betti numbers for a network with a given architecture, we approach the problem in reverse and provide upper bounds on the size of the architecture that can generate particular Betti numbers.

    We construct a set of PWL functions whose image is homotopy equivalent to that of a $d$-sphere $S^d$ and then construct a neural network that implements exactly these functions. Following this, we show how one can compose such networks to generate spaces that are homotopy equivalent to wedge sums and products of spheres. Finally, we conjecture a much stronger tight bound that provides the optimal architecture for expressing spaces homeomorphic to spheres.

\section{Results}

Before we begin with proving our main results, let us outline our assumptions along with the notation that we use. We will often interchangeably speak about the image of a map $f:\mathbb{R}^d\to \mathbb{R}^{d+1}$ and a restricted version of this map $f:X\to \mathbb{R}^{d+1}$ where $X$ is some compact contractible set. Usually this should not cause confusion, as our constructions either have compact supports or $f(\mathbb{R}^d) = f(X)$ for a compact set $X$. Unless otherwise stated we will work with reduced singular homology groups with coefficients in $\mathbb{Z}$, which we will briefly write as $\tilde{H}_n(X)$. We use the symbol $\simeq$ to denote homotopy equivalence and $\cong$ for homeomorphism or isomorphism which should be clear from context. To avoid disturbing the flow of the argumentation we have moved the longest proofs to the Appendix \ref{Appendix} and only provide proof sketches in the main text.

To find an upper bound for a neural architecture that can express a particular combination of Betti numbers we take the following approach. Using the fact that every ReLU network is a PWL map \cite{arora2018understanding} we start with an explicit construction of a PWL map that provably maps a contractible set in $\mathbb{R}^d$ to a space homotopy equivalent to $S^d$. We prove that this map can be implemented by a neural network with architecture $(d,W_d,d+1)$, with $W_d=\min(2d(d+1),3d+2)$. This gives us a network expressing Betti number $\beta_n=1$ for $n=d$ and 0 otherwise. By repeatedly applying several such maps to different subsets of $\mathbb{R}^d$, we are able to achieve arbitrary $\beta=(0,\beta_1,...,\beta_d)$. This repeated application amounts to repeatedly adding subnetworks from which we get the architecture $(d,\sum\limits_{k=1}^{d
  } \beta_kW_k,\sum\limits_{k=1}^{d}\beta_k(k+1))$ which is the upper bound in question. Alternatively, by expanding the input domain and stacking $N$ independent networks we get architecture $(\sum\limits_{n=1}\limits^N d_n, \sum\limits_{n=1}\limits^N W_{d_n}, \sum\limits_{n=1}\limits^N d_n + N)$, with $W_{d_n} = \min(2d_n(d_n+1),3d_n+2)$.

\subsection{Expressing the $d$-sphere}
In the first part we show how a network can be used to express $S^d$. We begin by introducing some helpful functions that can be combined to create the PWL map whose image is a space homotopy equivalent to $S^d$.

\begin{definition}
    The \textit{hat function} $p_a:\mathbb{R}^{d}\to\mathbb{R}$ centered at $a=(a_1,...,a_d)$ is given by the expression,
    \begin{equation*}
        p_a(x_1,...,x_d) = \max(0,1-\sum\limits_{l=1}^{d}|x_l-a_l|).
    \end{equation*}
\end{definition}

\begin{figure*}
    \centering
    \includegraphics[width=1\linewidth]{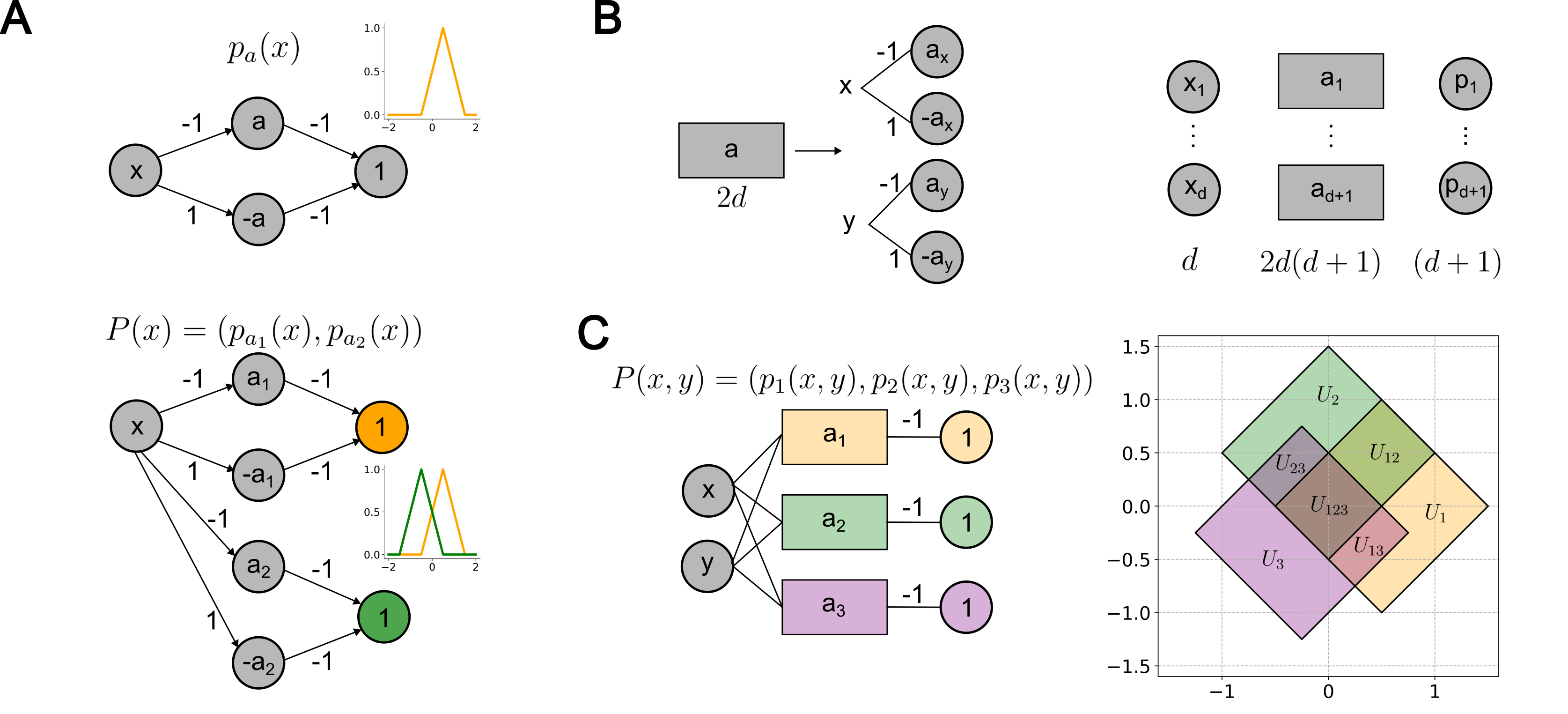}
    \caption{An illustration of how hat functions can be expressed by a neural network. \\
    \textbf{A.} A diagram of the ReLU neural network that expresses a single hat function in one dimension. Below it we show how one can stack several hat functions by varying the biases in the middle layer and adding additional output neurons. \\
    \textbf{B.} We use a rectangle to express the subnetwork in the middle layer that implements one absolute value function per input neuron. The arrow points to an example of such a layer in two dimensions. To the right we show how such a network would look like in $d$ dimensions. \\
    \textbf{C.} The corresponding network in two dimensions along with the Helly-arrangement described in Proposition \ref{Helly construction}.}
    \label{fig:construction}
\end{figure*}
Next we show that such functions can be easily represented by a neural network.

\begin{proposition}
    A single hat function $p_a$ can be realized by a neural network with an architecture $(d,2d,1)$. Additionally, after setting $x=(x_1,...,x_d)$, the ordered set of hat functions $P(x) = (p_1(x),...p_{d+1}(x))$ (we will from now on index the order rather than the center $a$) can be realized by a neural network architecture $(d,2d(d+1),d+1)$ or $(d,3d+2,d+1)$. 
\end{proposition}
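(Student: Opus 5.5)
The plan is to build everything from the identity $|t| = \mathrm{ReLU}(t) + \mathrm{ReLU}(-t)$ together with the fact that a nonnegative quantity passes through a ReLU unchanged. For a single hat function $p_a$, I will use a hidden layer of $2d$ ReLU units. For each coordinate $l$ there are two units,
\[
u_l^{+} = \mathrm{ReLU}(x_l - a_l), \qquad u_l^{-} = \mathrm{ReLU}(a_l - x_l).
\]
Each unit has weight $\pm 1$ on input $x_l$ and bias $\mp a_l$. The output neuron applies a ReLU with weights $-1$ on all $2d$ hidden units and bias $1$, so it computes
\[
\mathrm{ReLU}\Bigl(1 - \sum_{l=1}^{d}\bigl(u_l^+ + u_l^-\bigr)\Bigr) = \max\Bigl(0,\, 1-\sum_{l=1}^d |x_l-a_l|\Bigr) = p_a(x).
\]
This gives the architecture $(d,2d,1)$, provided the output layer carries a ReLU. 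If the convention is that the output layer is affine, I will still count the outer $\max(0,\cdot)$ as part of the final layer, consistent with Figure~\ref{fig:construction}A. I will state this convention explicitly.

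For the $d+1$ stacked hat functions $P=(p_1,\dots,p_{d+1})$ with centers $a^{(1)},\dots,a^{(d+1)}$, the first bound is immediate. I place the $d+1$ single-hat subnetworks side by side in the hidden layer. Each output neuron reads only from its own block of $2d$ units. The result is hidden width $2d(d+1)$ and output width $d+1$.

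For the $3d+2$ bound, I will choose the centers cleverly so that the absolute-value units can be shared, as in Figure~\ref{fig:construction}C. Take $a^{(1)}=0$ and $a^{(j+1)} = c\,e_j$ for $j=1,\dots,d$, for a suitable constant $c$. All centers then agree with $0$ in every coordinate except possibly one. The $2d$ units computing $|x_l|$ for $l=1,\dots,d$ serve every center in each coordinate where that center's component is $0$. For center $j+1$, the only coordinate that differs is $j$, where it needs $|x_j-c|$. I add two units per such center, $\mathrm{ReLU}(\pm(x_j-c))$, which is $2d$ more units.

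This gives $4d$ hidden units, which is too many. To reach $3d+2$ I will instead exploit the identity
\[
|t| = 2\,\mathrm{ReLU}(t) - t.
\]
The linear term $-t$ can be absorbed into the output weights, because the output neuron may use a linear combination of hidden units, but it cannot read $x$ directly. So I keep one unit per coordinate for $\mathrm{ReLU}(x_l)$, plus units that allow the linear term $x_l$ to be recovered as $\mathrm{ReLU}(x_l+M)-M$ on the compact domain $X$. That works if the domain is bounded, or alternatively by using $x_l = \mathrm{ReLU}(x_l) - \mathrm{ReLU}(-x_l)$. The shifted coordinates then cost one unit each, namely $\mathrm{ReLU}(x_j-c)$, with $|x_j - c| = 2\,\mathrm{ReLU}(x_j-c) - (x_j - c)$.

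The count comes out to
\begin{itemize}
\item $2d$ units for $\mathrm{ReLU}(\pm x_l)$, which give both $|x_l|$ and $x_l$;
\item $d$ units for $\mathrm{ReLU}(x_j-c)$;
\item $2$ extra units, to be allocated to handle the remaining center or the bias and sign bookkeeping.
\end{itemize}
That totals $3d+2$. Each output neuron then forms $1-\sum_l |x_l-a^{(i)}_l|$ as an affine combination of these units and applies its ReLU.

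The main obstacle is this last step: matching exactly $3d+2$ and justifying the placement of the centers. The construction must agree with the later Helly-arrangement in Proposition~\ref{Helly construction}, so the choice of centers has to be compatible with what the sphere construction needs. What I would try first is to read off the arrangement from Figure~\ref{fig:construction}C and determine which two extra units it requires. For example, $d+1$ centers of the form $a^{(i)}$ lying along a common shifted axis could need one shared shift unit plus one extra unit. I would then verify that the affine output combination reproduces each $p_i$ exactly on all of $\mathbb{R}^d$, not just on $X$.
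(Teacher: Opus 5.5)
Your construction of the single hat function and of the stacked $(d,2d(d+1),d+1)$ network is exactly the paper's argument. The $(d,3d+2,d+1)$ part, however, has a genuine gap.

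That bound concerns the specific centers of Proposition~\ref{Helly construction}: $a_k=\tfrac12 e_k$ for $k\le d$ and $a_{d+1}=-\tfrac{1}{2d}\sum_l e_l$. This is the $P$ used for the sphere, and Lemma~\ref{facts} is written in terms of exactly these centers. You switch to centers $0, c e_1,\dots,c e_d$, so you are building a different $P$. Even for those centers, your tally ends with ``2 extra units, to be allocated'', which is not a construction.

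If you run your scheme on the actual centers, you pay $2d$ units for $\mathrm{ReLU}(\pm x_l)$, to obtain $|x_l|$ and each $x_l$ separately. You then need $d$ units $\mathrm{ReLU}(x_l-\tfrac12)$ and another $d$ units $\mathrm{ReLU}(x_l+\tfrac{1}{2d})$. That is $4d$ units, which exceeds $3d+2$ for every $d\ge 3$.

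The missing idea is that you never need the individual $x_l$, only their sum, and that sum is shared by all outputs. Apply $|t|=2\,\mathrm{ReLU}(t)-t$ to every coordinate of every center, including the coordinates where the center is $0$:
\[
\sum_{l=1}^d |x_l-a_{k,l}| = 2\sum_{l=1}^d \mathrm{ReLU}(x_l-a_{k,l}) - \sum_{l=1}^d x_l + \sum_{l=1}^d a_{k,l}.
\]
The term $\sum_l x_l$ is identical for all $d+1$ outputs. So two units suffice for it, via $\sum_l x_l=\mathrm{ReLU}(\sum_l x_l)-\mathrm{ReLU}(-\sum_l x_l)$, and this is exact on all of $\mathbb{R}^d$.

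The remaining units are $\mathrm{ReLU}(x_l-v)$, one per coordinate $l$ and per distinct value $v$ that coordinate takes across the centers. For the Helly centers these values are $\{0,\tfrac12,-\tfrac{1}{2d}\}$, which gives $3d$ units and $3d+2$ in total. Each output neuron is then a ReLU of an affine combination of these units.

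For your own centers, the same trick gives $2d+2$ units, and your scheme already gives $3d$. So the extra ``$+2$'' in your count never had a role.
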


\begin{proof}
    The proof of this is constructive. The absolute value in $p_k(x)$ on each coordinate $x_k$ can also be written as $|x_k-a_k|=\max(0,x_k-a_k)+\max(0,a_k-x_k)$. This corresponds to two ReLU neurons with weights $\{1,-1\}$ and biases $\{-a_k,a_k\}$. This pattern repeats $d$ times, once for every input coordinate. After setting all weights from the second to the third layer to $-1$ and setting the bias of the last neuron to 1 one gets the same expression as the definition of $p_k(x)$. Since there are $d$ input dimensions and 2 neurons are required to express the absolute value, there need to be $2d$ neurons in the second layer. Therefore the network will have $(d,2d,1)$ architecture.

    The construction of $P(x)$ follows simply from stacking $d+1$ subnetworks that realize different hat functions $p_k(x)$. Since the input space is shared, the architecture becomes $(d,2d(d+1),d+1)$. A visual intuition for the proof is given in Figure \ref{fig:construction}.

    For the $(d,3d+2,d+1)$ bound, let $a_{k,l}$ be the $l$-th entry of the $k$-th center. Then $|x_l-a_{k,l}| = 2\max(0,x_l-a_{k,l})-x_l+a_{k,l}$ and $\sum x_l = \max(0, \sum x_l)+\max(0,-\sum x_l)$. So we write $2\sum_l \max(0,x_l-a_{k,l})-\max(0, \sum_l x_k)+\max(0,-\sum_l x_k)+\sum_l a_{k,l}=\sum_l|x_l-a_{k,l}|$. From this we have an expression for each of the $d+1$ functions in terms of ReLU activations. The constructed centers have three values $\{0,\frac{1}{2},-\frac{1}{2d}\}$, so we need $3d$ neurons realizing them the extra two terms $\max(0,\sum x_l)$ and $\max(0, -\sum x_l)$ add two more neurons to the width giving the architecture $(d,3d+2,d+1)$. 
\end{proof}

Next we have to show that this image of this function is in fact homotopy equivalent to $S^d$. The support of a hat function, which we denote by $U = \text{supp}(p_a) = \{x\in\mathbb{R}^d: p_a(x) > 0\}$, is an $L_1$ ball, which is an open convex set with the shape of a cross-polytope. Consider the following arrangement of $d+1$ such hat functions.

\begin{definition}
    Given an ordered set of hat functions $P = (p_1,p_2,...,p_{d+1})$ with supports $\{U_1,...,U_{d+1}\}$, we will say that $\Omega = \bigcup\limits_{k=1}^{d+1}U_k$ is in a \textit{Helly-arrangement} if $\bigcap\limits_{k=1}^{d+1}U_k \neq \emptyset$.
\end{definition}
The Helly arrangement property guarantees that, if we define $U_J = \bigcap\limits_{j\in J}U_j$, then for any $J \subset \{1,...,d+1\}=I$ we also have $U_I\subset U_J$ and $U_J\neq \emptyset$ for all $J$. Furthermore, the fact that every $U_i$ is convex implies that $U_J$ is convex and there is a retraction to a point $x_0\in U_I$. Therefore $\Omega$ is star-shaped and the supports form a good cover for it. Such an arrangement is shown in panel C of Figure \ref{fig:construction} and can be constructed by the following choice of centers for the hat functions.

\begin{proposition}
\label{Helly construction}
    The open cover $\Omega= \bigcup\limits_{k=1}^{d+1} U_k$ generated by the supports of $d+1$ hat functions with centers $a_k=\frac{1}{2}e_k$ (where $e_k$ is the standard basis) for $k\neq d+1$ and $a_{d+1} = -\frac{1}{2d}\sum_le_l$ forms a Helly-arrangement.
\end{proposition}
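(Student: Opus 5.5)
The plan is to exhibit one explicit point lying in every support $U_k$. By the definition of a Helly-arrangement, that is all that needs to be checked. Each $U_k$ is the open $L_1$ ball of radius $1$ around $a_k$:
\[
U_k=\{x\in\mathbb{R}^d:\ \textstyle\sum_l |x_l-a_{k,l}|<1\}.
\]
So the task is to find $x_0$ with $\|x_0-a_k\|_1<1$ for all $k=1,\dots,d+1$. The natural candidate is the origin $x_0=0$. The centers are built so that their sum is zero: $\sum_{k\le d}\tfrac12 e_k - \tfrac{1}{2d}\sum_l e_l\cdot d = 0$. This suggests $0$ is a balanced point sitting inside all the balls.

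First I check the $d$ axis-aligned hats. For $k\le d$, $\|0-\tfrac12 e_k\|_1=\tfrac12<1$, so $0\in U_k$.

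Next I check the last hat. $\|0-a_{d+1}\|_1=\sum_{l=1}^d \tfrac{1}{2d}=\tfrac12<1$, so $0\in U_{d+1}$ as well. Hence $0\in\bigcap_{k=1}^{d+1}U_k$, and the intersection is nonempty.

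There is no real obstacle; the only care needed is with the open versus closed convention. Since the support is defined with strict inequality $p_a(x)>0$, we need the strict bound $\tfrac12<1$, which holds with margin. I would also note the consequence the paper wants. Since $\|0-a_k\|_1=\tfrac12$, the open ball of $L_1$-radius $\tfrac12$ around $0$ lies in every $U_k$. So the common intersection has nonempty interior, which supports the later good-cover and star-shapedness remarks.
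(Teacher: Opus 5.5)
Your proof is correct and is exactly the paper's argument: at the origin, $\|0-a_k\|_1=\tfrac12<1$ (equivalently $p_k(0)=\tfrac12>0$) for every $k$, so $0$ lies in all supports. One small slip in your motivating aside: for $d\ge 2$ the centers do not sum to zero (they sum to $\tfrac{d-1}{2d}\sum_l e_l$, because $a_{d+1}$ is counted once, not $d$ times), but that remark plays no role in the proof.
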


\begin{proof}
    We only need to confirm that the maximal intersection is not empty. If we plug the zero vector into the collection of hat functions $P$, we get $p_i(0) = \max(0,1-\sum|0-\frac{1}{2}e_i|)=\frac{1}{2}$ for all $i<d+1$ and $p_{d+1}(0) = \max(0,1-d/{2d}) = \frac{1}{2}$. Therefore the zero vector is in the support of all the functions and the maximal intersection is non-empty.
\end{proof}

The fact that $\Omega$ is star-shaped implies that $\tilde{H}_k(\Omega)=0$ for all $k$, but this can also be seen by the nerve of the cover $N(\Omega)=\{J\subseteq I:\bigcap\limits_{i\in J} U_j\neq \emptyset \}$ which in this case is a $d$-simplex and therefore has trivial homology groups (see \cite{hatcher2005algebraic} for more information). The interesting question is to understand what happens to $\Omega$ after it is mapped by the PWL map $P$. For this we prove the following lemma.

\begin{lemma}
\label{semilattice lemma}
    Given a cover of $\Omega$ defined by the supports of $P$, the intersections can be expressed as $\bigcap\limits_{j\in J} P(U_j) = P(\bigcap\limits_{j\in J} U_j)$.
\end{lemma}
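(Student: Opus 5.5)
The plan is to prove the two inclusions separately, relying on one observation: each support $U_j$ is the preimage under $P$ of an open coordinate half-space. By the definition of the support, $U_j=\{x: p_j(x)>0\}$. Writing $H_j=\{y\in\mathbb{R}^{d+1}: y_j>0\}$, this gives
\begin{equation*}
U_j = P^{-1}(H_j),
\end{equation*}
where $P^{-1}$ is taken inside the domain of $P$ (either $\mathbb{R}^d$ or $\Omega$; the argument is the same). Consequently $P(U_j)=P(\mathrm{dom}\,P)\cap H_j$. That is, the image of a support is the image of $P$ cut down to the points whose $j$-th coordinate is positive.

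The inclusion $P(\bigcap_{j\in J}U_j)\subseteq\bigcap_{j\in J}P(U_j)$ holds for any map, so it needs no argument. For the reverse inclusion, I take $y\in\bigcap_{j\in J}P(U_j)$ with $J$ nonempty. Fixing any $j_0\in J$, membership in $P(U_{j_0})$ gives some $x$ with $y=P(x)$. Membership in every $P(U_j)$ shows that each such $y$ has $y_j>0$ for all $j\in J$, since $y=P(x_j)$ with $x_j\in U_j$ means $y_j=p_j(x_j)>0$. Applying this to the single preimage $x$ gives $p_j(x)=y_j>0$ for every $j\in J$. Hence $x\in U_j$ for all $j\in J$, so $x\in\bigcap_{j\in J}U_j$ and $y=P(x)\in P(\bigcap_{j\in J}U_j)$.

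The step that needs care is this reverse inclusion. In general, images of intersections are strictly smaller than intersections of images, because the witnesses $x_j\in U_j$ with $P(x_j)=y$ may differ for different $j$. The obstacle disappears because membership in $U_j$ is decided entirely by the value $P(x)$, through the $j$-th coordinate. So any single preimage of $y$ already lies in all the relevant supports, and we need neither injectivity of $P$ nor the Helly property. I would also note that the Helly arrangement of Proposition~\ref{Helly construction} guarantees that both sides are nonempty for every $J$. This is the fact that will be used afterwards to identify the nerve of the image cover $\{P(U_j)\}$ with that of $\{U_j\}$.
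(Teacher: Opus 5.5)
Your proof is correct and takes essentially the same route as the paper, which writes $P(U_i)=\{P(x):p_i(x)>0\}$ and intersects these set-builder descriptions directly. Your observation that $U_j=P^{-1}(\{y:y_j>0\})$ is exactly what justifies the paper's middle step $\{P(x):p_i(x)>0\}\cap\{P(x):p_j(x)>0\}=\{P(x):p_i(x)>0\land p_j(x)>0\}$. The paper leaves that step implicit, and it would fail for a map where membership in $U_j$ cannot be read off from the value $P(x)$.
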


\begin{proof}
    Every set $U_i$ corresponds to an $L_1$ ball and $P(U_i) = \{P(x):p_i(x)>0\}$. Then looking at an arbitrary intersection $U_{ij} = U_i\cap U_j = \{x: p_i(x)>0 \land p_j(x)>0\}$, we can also write $P(U_i\cap U_j) = \{P(x):p_i(x)>0 \land p_j(x)>0\}$. What about $P(U_i)\cap P(U_j)$? We can write out the sets,
    \begin{equation*}
    \begin{split}
        &P(U_i)\cap P(U_j) =\\
        &\{P(x):p_i(x)>0\}\cap\{P(x):p_j(x)>0\} = \\
        &\{P(x):p_i(x)>0 \land p_j(x)>0\} =\\
        &P(U_i\cap U_j).
    \end{split}
    \end{equation*}
    Since we can do this for arbitrary intersections this concludes the proof.
\end{proof}

It is clear that using the supports of $P$ to define a cover on $\Omega$ leads to some nice relation to its the nerve. It turns out that the two nerves are actually isomorphic.

\begin{theorem}
    \label{nerve homotopy}
    Given a good cover in Helly arrangement $\Omega = \bigcup\limits_{k=1}^{d+1}U_k$ and a map $P:\Omega\to P(\Omega)$ such that $\bigcap\limits_{j\in J} P(U_j) = P(\bigcap\limits_{j\in J} U_j)$, there is an isomorphism between $N(\Omega)$ and $N(P(\Omega))$ as sets. 
\end{theorem}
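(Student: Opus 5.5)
The plan is to compare the two nerves on the common vertex set $I=\{1,\dots,d+1\}$. The cover of $P(\Omega)$ is taken to be $\{P(U_1),\dots,P(U_{d+1})\}$, indexed by the same labels as the cover of $\Omega$. We then show that a subset $J\subseteq I$ is a simplex of $N(\Omega)$ exactly when it is a simplex of $N(P(\Omega))$. The required isomorphism of sets is then the identity map on index sets $J\mapsto J$, extended to simplices. This map automatically respects faces, since both nerves are closed under taking subsets.

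The key steps, in order, are as follows.
\begin{enumerate}
\item First note that $P(\Omega)=\bigcup_k P(U_k)$, because the image of a union is the union of the images. So $\{P(U_k)\}$ is indeed a cover of $P(\Omega)$ with the same index set $I$.
\item For any nonempty $J\subseteq I$, the hypothesis (which for the hat-function map $P$ is exactly Lemma \ref{semilattice lemma}) gives
\[
\bigcap_{j\in J}P(U_j)=P\Bigl(\bigcap_{j\in J}U_j\Bigr)=P(U_J).
\]
\item Use the elementary fact that for any map, $P(A)=\emptyset$ if and only if $A=\emptyset$. Combined with the previous step, this gives $\bigcap_{j\in J}P(U_j)\neq\emptyset$ if and only if $U_J\neq\emptyset$. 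So $J\in N(P(\Omega))$ if and only if $J\in N(\Omega)$, and the two nerves coincide as sets of subsets of $I$.
\item Optionally record the concrete outcome in the Helly case. There, $U_I\neq\emptyset$ forces $U_J\supseteq U_I\neq\emptyset$ for every $J$. Hence both nerves equal the full $d$-simplex, the power set of $I$ minus the empty set.
\end{enumerate}

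The main obstacle is bookkeeping rather than mathematics: we must be precise about which cover of $P(\Omega)$ is meant. Two images $P(U_i)$ and $P(U_j)$ with $i\neq j$ could in principle coincide as sets. If the nerve were defined on the set of distinct cover elements rather than on indices, the vertex sets could then differ. I would handle this by fixing the nerve to be indexed by $I$, as the paper does in defining $N(\Omega)$. With that convention the argument above is complete.

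I would not claim in this theorem that $\{P(U_k)\}$ is a good cover, or that the intersections $P(U_J)$ are open in $P(\Omega)$. The statement only asks for an isomorphism of nerves as sets, so I would explicitly flag these properties as separate issues. Any homotopy conclusion about $P(\Omega)$ drawn through the Nerve Theorem would need them established later.
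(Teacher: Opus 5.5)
Your proposal is correct and follows the paper's own argument. Both reduce the claim to $J\in N(\Omega)\iff J\in N(P(\Omega))$ by combining the hypothesis $\bigcap_{j\in J}P(U_j)=P(U_J)$ with the fact that $P(U_J)$ is empty exactly when $U_J$ is, and both use the Helly condition $U_I\subseteq U_J$ to conclude that every $U_J$ is nonempty; your explicit indexing of the cover $\{P(U_k)\}$ by $I$ usefully spells out bookkeeping the paper leaves implicit.
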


\begin{proof}
    This amounts to showing that $J\in N(\Omega) \iff J\in N(P(\Omega))$.

    The first direction is straightforward, if $J \in N(\Omega)$, then $P(U_J) \neq \emptyset$, as $J$ hat functions are supported on this set.

    For the second direction, $J\in N(P(\Omega))$ implies that $\bigcap\limits_{j\in J} P(U_j) \neq \emptyset$ and by applying Lemma \ref{semilattice lemma} we can write it as $P(\bigcap\limits_{j\in J}U_j)\neq \emptyset$, which can only be true if $\bigcap\limits_{j\in J}U_j$ is non-empty. $U_J$ is non-empty for any $J$ since $U_I \subset U_J$ and $U_I\neq \emptyset$.

    Therefore $N(\Omega) \cong N(P(\Omega))$ as sets.
\end{proof}

If we extend $P$ to $\hat{P}:\mathbb{R}^d\to\mathbb{R}^{d+1}$ we have $\hat{P}(\mathbb{R}^d-\Omega) = 0 = \hat{P}(\partial \Omega)$. Then we can prove the following theorem.

\begin{theorem}
    Consider $X=\Omega\cup\partial \Omega$, then $\hat{P}(X)$ has a good cover $C=(\bigcup\limits_{i \in I}\bar{R}_{I\setminus i})\bigcup\bar{V}_I$, where $\bar{R}_J = \{y\in \hat{P}(X): y_j=0 \forall j \notin J\}$ and $\bar{V}_I = \hat{P}(\bar{U}_I)$. This implies that $\hat{P}(X) \cong S^d$ and therefore $\tilde{H}_*(\hat{P}(X)) \cong \tilde{H}_*(S^d)$
\end{theorem}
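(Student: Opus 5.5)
The plan is to apply the Nerve Theorem to the closed cover $C$ of $\hat{P}(X)$ and identify its nerve with the boundary of a $(d+1)$-simplex, $\partial\Delta^{d+1}\cong S^d$. The cover has $d+2$ members. There are $d+1$ ``coordinate faces'' $\bar{R}_{I\setminus i}=\{y\in\hat{P}(X): y_i=0\}$ for $i\in I$, and one ``central'' piece $\bar{V}_I=\hat{P}(\bar{U}_I)$, the image of the region where all hat functions are positive. Since the sets are closed, I would use the version of the Nerve Theorem for finite closed covers of a compact polyhedron by subpolyhedra with contractible or empty intersections. This applies because $\hat{P}$ is PWL, so $\hat{P}(X)$ and all pieces of $C$ are compact polyhedra.

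The steps, in order, are as follows.

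First, show that $C$ covers $\hat{P}(X)$. A point $y=\hat{P}(x)$ either has some coordinate $y_i=0$, in which case it lies in $\bar{R}_{I\setminus i}$, or has all $y_i>0$. In the second case $x\in U_I$, so $y\in\bar{V}_I$.

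Second, compute the intersections. For $K\subsetneq I$ nonempty, $\bigcap_{i\in K}\bar{R}_{I\setminus i}=\bar{R}_{I\setminus K}$. The full intersection over $K=I$ is $\{0\}$; it is nonempty because $\partial\Omega$ maps to $0$. Intersections with $\bar{V}_I$ are $\bar{V}_I\cap\bar{R}_{I\setminus K}=\hat{P}(\partial U_I\cap\{p_i=0,\ i\in K\})$, using the analogue of Lemma \ref{semilattice lemma} for closed sets defined by sign conditions.

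Third, show each nonempty intersection is contractible. For $\bar{R}_J=\hat{P}(\{x\in X: p_i(x)=0\ \forall i\notin J\})$, I would exhibit a straight-line homotopy to $0$. Scaling $y\mapsto ty$ does not obviously preserve the image, so instead I would use a radial retraction in the domain, pushing $x$ away from $U_{J^c}$ toward $\partial\Omega$, which decreases all $p_j$ monotonically to $0$. Alternatively, I would show that $\bar{R}_J$ is the image of a star-shaped set (with respect to a boundary point) under a map that is injective on it. Injectivity of $P$ on $\bar{U}_I$ should follow from the affine independence of the $d+1$ affine hat-pieces there. This gives $\bar{V}_I\cong\bar{U}_I$, which is convex and hence contractible. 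Its intersections with the faces are images of faces of the polytope $\bar{U}_I$ and are likewise contractible.

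Fourth, determine the nerve. All proper subfamilies of $C$ intersect, with the faces meeting at $0$ and $\bar{V}_I$ meeting each face along $\partial U_I$. The full family has empty intersection, because $0\notin\bar{V}_I$: every $p_i$ is positive on $\bar{U}_I$ except at boundary points, and at most $d$ of them vanish simultaneously there. The nerve is therefore $\partial\Delta^{d+1}$. By the Nerve Theorem, $\hat{P}(X)\simeq S^d$ (the statement says $\cong$, but homotopy equivalence is what is needed for homology), giving $\tilde{H}_*(\hat{P}(X))\cong\tilde{H}_*(S^d)$.

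The main obstacle is the third step, specifically the contractibility of the faces $\bar{R}_J$, whose preimages are nonconvex regions of $X$ near $\partial\Omega$. The check that no point of $\bar{U}_I$ has all $d+1$ coordinates zero also belongs to this step. For the faces, I would first try to prove that $\hat{P}$ restricted to $\{x: p_i(x)=0\ \forall i\notin J\}$ deformation retracts along rays from the Helly center $x_0=0$ outward. Each $p_j$ is nonincreasing along such rays once outside $U_I$, so the image is contracted to $0$ while staying inside $\bar{R}_J$.
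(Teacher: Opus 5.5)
Your skeleton matches the paper's: the cover $C$, the pattern of nonempty intersections, $0\notin\bar V_I$, the nerve $\partial\Delta^{d+1}$, and a conclusion of $\simeq$ rather than $\cong$. Step 3, however, carries all the content, and it fails as proposed.

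First, $P$ is not injective on $\bar U_I=\{\|x\|_1\le\tfrac12\}$. There $p_k=\tfrac12-S+2x_k^+$ for $k\in T$ and $p_{d+1}=\tfrac12-S+2\sum_l\min(x_l^-,\tfrac1{2d})$. So on the box $[-\tfrac1{2d},0]^d\subseteq\bar U_I$ all $d+1$ gradients are $\pm(1,\dots,1)$, and every slice $\{S=s\}$ collapses to a point. Concretely, for $d=2$ the points $x=(-0.1,-0.1)$ and $x'=(-0.05,-0.15)$ both map to $(0.3,0.3,0.7)$. For $d=2$, $\bar V_I$ is not even a disc. Take the images of $\mathrm{conv}\{(0,-\tfrac14),(\tfrac14,-\tfrac14),(0,-\tfrac12)\}$, of its mirror under $x_1\leftrightarrow x_2$, and of $\mathrm{conv}\{(-\tfrac14,0),(-\tfrac12,0),(-\tfrac14,-\tfrac14)\}$. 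These are three triangles in distinct planes sharing the edge from $(\tfrac14,\tfrac14,\tfrac34)$ to $(0,0,\tfrac12)$. So $\bar V_I\cong\bar U_I$ is false. Likewise, contractibility of $\bar V_I\cap\bar R_{I\setminus K}$ cannot be read off from the domain sets $\bar U_I\cap\{p_i=0,\ i\in K\}$.

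Second, for the faces, pushing points of $D_J=\{x\in X:p_i(x)=0,\ i\notin J\}$ outward along rays from $0$ only gives a null-homotopy of the map $\hat P|_{D_J}:D_J\to\bar R_J$. That says nothing about $\bar R_J$ itself; for instance, every surjection $[0,1]\to S^1$ is null-homotopic. To contract $\bar R_J$, the deformation $r_t$ must respect fibres, i.e. $\hat P(x)=\hat P(x')\Rightarrow\hat P(r_t x)=\hat P(r_t x')$. The radial deformation does not. For $d=2$, $x=(0.45,0.1)$ and $x'=(0.5,0.15)$ lie in $X\setminus U_3$ with $\hat P(x)=\hat P(x')=(0.85,0.15,0)$. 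However, $p_1(\lambda x)=\tfrac12+0.35\lambda$ for $1\le\lambda<\tfrac{10}{9}$, while $p_1(\lambda x')=\tfrac32-0.65\lambda$ for $\lambda\ge1$, so the images separate immediately. The point $x$ also refutes your claim that each $p_j$ is nonincreasing along rays outside $U_I$.

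What is missing is an argument on the image side, or a proof that a specific domain homotopy factors through $\hat P$. The paper supplies exactly this:
\begin{itemize}
\item For $d+1\notin J$, it identifies $\bar R_J$ as $\mathrm{conv}\{0,e_k:k\in J\}$, using the bound $\sum_{k\in T}p_k\le1$ on $\{p_{d+1}=0\}$ plus explicit preimages.
\item For $d+1\in J$, it shows $\bar R_J$ is a lower set in $y_{d+1}$. This uses a domain flow that fixes $y_T$ and lowers $y_{d+1}$, with explicit preimages when $\alpha\le1$. The vertical homotopy $(y_T,(1-t)y_{d+1})$ then retracts $\bar R_J$ onto a simplex.
\item For the core, it takes the straight-line contraction to $a_{d+1}$ and proves $\hat P(x)=\hat P(x')\Rightarrow\hat P(x(t))=\hat P(x'(t))$. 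It then descends the contraction through the closed quotient map $\hat P\times\mathrm{id}$.
\item It shows $\bar V_I\cap\bar R_J$ is either a simplex or deformation retracts onto the graph of $z^*(y)=\min(1-\sum y_l,\tfrac1d)$ over one.
\end{itemize}
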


\begin{proof}
\textit{sketch:}
\\
The full proof of this statement can be found in Appendix \ref{Appendix}. The intuition is that since $\Omega$ is starshaped, then $X=\bar{\Omega}$ is homeomorphic to the disk $D^d$, additionally we know that $\hat{P}(\partial \Omega)=0$ glues its boundary to a point. This means that $\hat{P}$ acts similarly to a quotient map on the boundary of $D^d$ which gives $D^d/S^{d-1} \cong S^d$. What remains to be shown is that $\hat{P}$ does not lead to non-homotopic identifications elsewhere on $X$.  

To do that we take proposed cover and prove that all possible intersections of its subsets are contractible. Then by Borsuk's nerve theorem \cite{borsuk1948imbedding} we show that the nerve of this cover has the structure of the boundary of the simplex $\partial \Delta^{d+1}\cong S^d$. This simplex is exactly homotopy equivalent to $S^d$. 
\end{proof}

\subsection{Extending the construction to wedge sums of spheres}

\begin{figure*}
    \centering
    \includegraphics[width=1\linewidth]{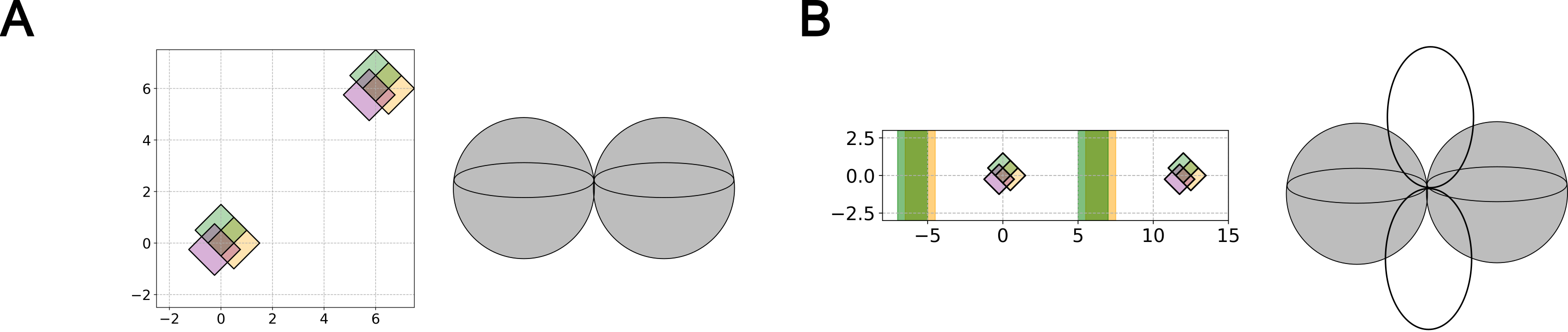}
    \caption{\textbf{A.} An example of the wedge of two spheres that arises through the Helly-arrangement living within $[-1.5,1.5]^2$ and $[4.5,7.5]^2$. \textbf{B.} An example of the wedge of two spheres and two circles along with an arrangement that defines it. The four sets are defined strictly within $[-7.5,-4.5]\times(-\infty,\infty)$, $[-1.5,1.5]^2$, $[4.5,7.5]\times(-\infty,\infty)$} and $[10.5,13.5]\times[-1.5,1.5]$ respectively. Notice that the configurations generating the circles span the whole $y$ direction, thus it is important to always choose the shifts on the $x$ axis, otherwise two circles might end up intersecting in more places than the origin.
    \label{fig:wedge}
\end{figure*}

So far we have shown that our construction is capable of representing spaces that have the homology of a sphere and therefore have Betti numbers of the form $\beta_n = 0$ for $n\neq d$ and $\beta_d = 1$. We now show that it is possible to extend our result to represent wedge sums of spheres. As a reminder the wedge sum of two spaces is defined as $X\vee Y = X\sqcup Y/(x_0\sim y_0)$, where $x_0\sim y_0$ indicates that the two spaces are glued at those two points.

First off, let us explicitly add the centers of the different hat functions in $P(x;A)$, where $A = (\bm{a}_1,...,\bm{a}_{d+1})$ and each $\bm{a}_k= (a_{k,1},...,a_{k,d})$ is a vector denoting the center of the hat function. Notice that the function $P(x;A)$ is invariant under the transformation $(x;A) \to (x+v,A+v\bm{1}^T)$. Geometrically, this means that if we shift all hat functions in the same way as we shift the inputs the output will be the same. Also notice that the support of the construction we defined in Proposition \ref{Helly construction} is strictly contained in the hypercube $[-1.5,1.5]^d$. Therefore, if we repeat the same construction outside of this hypercube, we can expect to get a new sphere in the image of the collective map $(P_1(x;A),P_2(x;,A+v\bm{1}^T))$. This means that we can tile $\mathbb{R}^d$ by different families of hat functions, which leads us to the following construction.

\begin{definition}
    Define the family of maps $\bm{P} = (\hat{P}_1,...,\hat{P}_m):\mathbb{R}^d\to\mathbb{R}^{m(d+1)}$ such that $\hat{P}_k(x) = \hat{P}(x;\hat{A}+v_k\bm{1}^T)$, where $v_k\in 6\mathbb{Z}^d$ is different for all $k$ and $\hat{A}$ is the Helly arrangement defined in Proposition \ref{Helly construction}.
\end{definition}

Every $\hat{P}_k(x,\hat{A}+v_k)$ is supported on a set that is contained in $[-1.5,1.5]^d+v_k$ and is disjoint from the supports of other such functions (this is guaranteed by picking $6\mathbb{Z}^d$). Furthermore, we already know that $\hat{P}_k(\mathbb{R}^d) \cong S^{d}$. This motivates our following theorem,

\begin{theorem}
\label{wedge theorem}
    Given the family of maps $\bm{P} = (\hat{P}_1,...,\hat{P}_m)$ acting on $\mathbb{R}^d$, we have $\tilde{H}_n(\bm{P}(\mathbb{R}^d)) \cong \tilde{H}_n(\bigvee\limits^{m} S^d)$.
\end{theorem}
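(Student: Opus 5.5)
The plan is to show that $\bm{P}(\mathbb{R}^d)$ is a union of $m$ copies of $S^d$ that meet only at the origin of $\mathbb{R}^{m(d+1)}$, and then to compute its homology by Mayer--Vietoris or by the wedge-sum formula. Write $\mathbb{R}^{m(d+1)}=\bigoplus_{k=1}^m \mathbb{R}^{d+1}_{(k)}$ and let $Q_k=[-1.5,1.5]^d+v_k$. Since the supports of the $\hat{P}_k$ lie in the pairwise disjoint cubes $Q_k$ (the spacing is $6\mathbb{Z}^d$), for $x\in Q_k$ every other component vanishes, so $\bm{P}(x)=(0,\dots,\hat{P}_k(x),\dots,0)$. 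For $x\notin\bigcup_k Q_k$ we get $\bm{P}(x)=0$.

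Therefore
\[
\bm{P}(\mathbb{R}^d)=\bigcup_{k=1}^m \iota_k\big(\hat{P}_k(Q_k)\big),
\]
where $\iota_k$ is the inclusion of the $k$-th block. By the translation invariance $(x;A)\to(x+v,A+v\bm{1}^T)$, each $\hat{P}_k(Q_k)$ equals $\hat{P}(X)$ up to adding the point $0$. Here $0=\hat{P}(\partial\Omega)$ already lies in $\hat{P}(X)$, so the preceding theorem gives $\hat{P}_k(Q_k)\cong S^d$, with $0$ as a point of it. Two distinct blocks $\iota_k(\cdot)$ and $\iota_l(\cdot)$ live in complementary coordinate subspaces, so they intersect only in $\{0\}$. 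It follows that $\bm{P}(\mathbb{R}^d)$ is, as a set, the union of $m$ spheres $S_k=\iota_k(\hat{P}_k(Q_k))$ with $S_k\cap S_l=\{0\}$ for $k\neq l$.

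Next I will check that this union, with the subspace topology from $\mathbb{R}^{m(d+1)}$, is homeomorphic to the wedge $\bigvee^m S^d$ taken at the basepoint $0$. Each $S_k$ is compact, hence closed, and there are finitely many of them. The canonical continuous bijection $\bigsqcup_k S_k/\!\sim\ \to \bigcup_k S_k$ therefore goes from a compact space to a Hausdorff space, so it is a homeomorphism.

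To pass to homology I will use the standard formula $\tilde{H}_n(\bigvee X_\alpha)\cong\bigoplus_\alpha\tilde{H}_n(X_\alpha)$, which requires each basepoint to be a deformation retract of some neighborhood, i.e.\ a good pair. This is the step I expect to need the most care, since the spheres are only known up to homeomorphism. I would handle it in one of two ways. The first is to note that each $S_k$ is a compact polyhedron, being the image of a PWL map, so $(S_k,0)$ is a CW pair and hence a good pair. The second is to avoid goodness altogether by building a good cover directly: take the sets $\bar{R}_{I\setminus i}$ and $\bar{V}_I$ of the preceding theorem in each block, all of which contain $0$, and apply the nerve theorem. Either way, $\tilde{H}_n(\bm{P}(\mathbb{R}^d))\cong\bigoplus_{k=1}^m\tilde{H}_n(S^d)\cong\tilde{H}_n(\bigvee^m S^d)$. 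As a fallback I would argue by induction on $m$ with Mayer--Vietoris, using open neighborhoods of $S_1$ and of $\bigcup_{k\ge2}S_k$ that deformation retract onto them and whose intersection is a contractible neighborhood of $0$.
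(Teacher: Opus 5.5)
Your proposal is correct and follows the paper's route: the image is $m$ copies of the single-sphere image sitting in complementary coordinate blocks and meeting only at the origin, hence a wedge, and reduced homology is additive over wedges. Your compact-to-Hausdorff argument for the homeomorphism and your good-pair justification supply details the paper leaves implicit, and they keep the argument valid even if the single-sphere result is read only as a homotopy equivalence, which is what the nerve theorem actually gives.

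There is one small slip in your optional nerve-theorem alternative: the core $\bar{V}_I$ does not contain $0$ (the appendix notes $\mathbf{0}\cap\bar{V}_I=\emptyset$), so cross-block intersections involving a core are empty rather than $\{0\}$. The cover is still good, and your primary CW-pair route does not rely on this.
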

\begin{proof}
    The wedge sum of two spaces is defined by $X\vee Y = X\sqcup Y/\sim$, where $\sim$ implies identifying $x_0\sim y_0$ to each other. The image of each $\hat{P}_k$ lives in a separate subspace $\mathbb{R}^{d+1}\subset \mathbb{R}^{m(d+1)}$ and we can consider the disjoint union $\bigsqcup\limits_{k=1}^{m} \hat{P}_k(\mathbb{R}^d)$. 
    
    However, every $\hat{P}_k$ projects the complement of its support to $\{0\}$, which implies that the images of all $\hat{P}_k$ intersect in the origin. Therefore, we need to identify all $\{\{0\}_1,...,\{0\}_m\}$ (which are separate elements in the disjoint union) with each other. This is equivalent to gluing all $\hat{P}_k(\mathbb{R}^d)$ at the origin, which gives $\bigvee\limits_{k=1}^{m}\hat{P}_k(\mathbb{R}^d) \cong \bigvee\limits^{m} S^d$. Therefore $\tilde{H}_n(\bm{P}) \cong \tilde{H}_n(\bigvee\limits^{m} S^d)$ follows from the homotopy invariance of homology. From the fact that the wedge sum of spaces reduces to the direct sum of their homology groups, we have $\tilde{H}_n(\bigvee\limits_{k=1}^{m}\hat{P}_k(\mathbb{R}^d)) = \bigoplus\limits_{k=1}^{m}\tilde{H}_n(\hat{P}_k(\mathbb{R}^d)) \cong \bigoplus\limits^{m}\tilde{H}_n(S^d)$ and 
    \begin{align*}
        \tilde{H}_n(\bm{P}) = 
        \begin{cases}
            \mathbb{Z}^m & n=d\\
            0 & n\neq d.
        \end{cases}
    \end{align*}
\end{proof}

This construction is more powerful than the previous one as it can express Betti numbers of the form $\beta_n=0$ for $n\neq d$ and $\beta_n=m$ for $n=d$. Since we are simply repeatedly adding the same amount of hat functions for each sphere and we project them to independent spaces, the network architecture that can express this is $(d,mW,m(d+1))$. Concrete examples of this can be seen in Figure \ref{fig:wedge}.

Finally, it is straightforward to see that one can use a subset of coordinates $x_C$ for $C \subset \{1,...,d\}$ to express spheres of lower dimension, though some care has to be taken in choosing $v_k \in 6\mathbb{Z}^d$. Assuming $1\in C$, a safe strategy is to choose $v_k = (6k,0,...0)$. In a neural network this would correspond to setting the weights between the first and the second layer to 0 for the neurons not in $C$. This brings us to our final construction, which establishes the most general bound on homology groups. 

\begin{theorem}
\label{thm: wedge sum networks}
    A neural network of architecture $(d,\sum\limits_{k=1}^d \beta_k W_k,\sum\limits_{k=1}^d \beta_k(k+1))$ can express a space of Betti numbers $\beta= (0,\beta_1,...,\beta_d)$. Additionally, for this network $\bm{P}(\mathbb{R}^d)\simeq\bigvee\limits_{k=1}^d(\bigvee\limits^{\beta_k} S^{k})$.
\end{theorem}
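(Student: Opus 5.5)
The plan is to build the network as a parallel stack of independent sphere-generating subnetworks, one for each sphere, and then compute the homology of the image with the wedge-sum argument of Theorem \ref{wedge theorem}. For every $k\in\{1,\dots,d\}$ and every $j\in\{1,\dots,\beta_k\}$, I take a copy of the map $\hat P$ from the Helly arrangement of Proposition \ref{Helly construction} in dimension $k$. This copy acts only on the coordinates $x_C$ with $C=\{1,\dots,k\}$, so its input weights from $x_{k+1},\dots,x_d$ are set to zero.

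The copy is translated by a shift $v_{k,j}=(6n_{k,j},0,\dots,0)$, where $n_{k,j}$ is a distinct integer for each pair $(k,j)$. By the second proposition of the paper, each copy needs $W_k$ hidden neurons and $k+1$ output neurons. Stacking them in parallel on the shared input therefore gives width $\sum_k\beta_kW_k$ and output dimension $\sum_k\beta_k(k+1)$, which is the stated architecture. Since $\hat P$ is invariant under the joint shift of inputs and centers, each block $\hat P_{k,j}$ has the same image as the unshifted map.

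The next step is to show that each block's image is a $k$-sphere sitting in its own coordinates. A block depends only on $x_C$, so its image over $\mathbb{R}^d$ equals the image of the $k$-dimensional map over $\mathbb{R}^k$. By the sphere theorem, that image is the union of $\hat P(X)\cong S^k$ with the point $0$, and $0$ already lies in $\hat P(X)$. The blocks have output coordinates in mutually orthogonal subspaces $\mathbb{R}^{k+1}$, so $\bm P(\mathbb{R}^d)$ sits inside the product of these images.

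Here is the delicate point, and the main obstacle. If the supports are disjoint, any $x$ makes at most one block nonzero. In that case $\bm P(\mathbb{R}^d)$ is exactly the union of the individual images, glued only at the origin. This gives $\bigvee_k\bigvee^{\beta_k}S^k$, and the Betti numbers follow from additivity of reduced homology under wedge sums, exactly as in Theorem \ref{wedge theorem}.

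The supports of a block of dimension $k<d$ are slabs of the form $([-1.5,1.5]^k+v)\times\mathbb{R}^{d-k}$, because that block ignores the remaining coordinates. Every shift is placed along $x_1$ with spacing $6$. The projection of each support onto $x_1$ is therefore contained in an interval $[6n-1.5,\,6n+1.5]$, and distinct $n$ give disjoint intervals. Hence the supports are pairwise disjoint in $\mathbb{R}^d$, regardless of their extent in the other directions.

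The only thing I would check carefully is that the $k=1$ case still works with $C=\{1\}$. It does: there are two hat functions centered at $\pm\tfrac12$, and the sphere theorem gives $S^1$. I would also record explicitly that $\beta_0=1$ follows from connectedness of the image, since every block passes through $0$.
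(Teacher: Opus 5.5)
Your proposal is correct and follows essentially the same route as the paper. You stack independent sphere blocks acting on the coordinates $\{1,\dots,k\}$, translate them along $x_1$ by distinct multiples of $6$ so that their slab-shaped supports $([-1.5,1.5]^k+v)\times\mathbb{R}^{d-k}$ are pairwise disjoint, add up the widths, and read off the homotopy type from the wedge-sum argument of Theorem~\ref{wedge theorem}; your explicit check that disjointness comes from the $x_1$-projections is exactly the care the paper asks for when it says all shifts must be taken along the first axis.
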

\begin{proof}
    Start with the $\beta_d$ full dimensional spheres. Representing each such sphere requires a network of architecture $(d,W_d,d+1)$. We can pick $\hat{P}_1(x)$ to have support over a subset of $[-1.5,1.5]^d$. We can pick the next function $\hat{P}_2(x)$ to have support over a subset $[4.5,7.5]\times[-1.5,1.5]^{d-1}$. We can repeat this for all $\beta_d$ spheres, by picking $\text{supp}( \hat{P}_{i}(x)) \subset [-1.5+6(i-1),1.5+6(i-1)]\times[-1.5,1.5]^{d-1}$ while $i\leq \beta_d$ and since the input space is shared we only need to stack the middle and the last layer of the network. Therefore we get the architecture $(d,\beta_dW_d,\beta_d(d+1))$.

    Next we represent the $\beta_{d-1}$, $d-1$-spheres. Each such sphere requires $(d-1, W_{d-1},d)$ neurons. We have to define their supports such that $\text{supp}(\hat{P}_{\beta_d+i}) \subset[-1.5+6(\beta_d+i-1),1.5+6(\beta_d+i-1)]\times[-1.5,1.5]^{d-2}$. We inductively do this for the spheres of any dimension. Since we know that $\beta_k$ copies of $k$-dimensional spheres are expressed by architectures $(k,\beta_kW_{k},\beta_k(k+1))$. Summing over all dimensions gives us the bound $(d,\sum\limits_{k=1}^d\beta_kW_{k},\sum\limits_{k=1}^d\beta_k(k+1))$.

    Finally, the fact that $\bm{P}(\mathbb{R}^d)\simeq\bigvee\limits_{k=1}^d(\bigvee\limits^{\beta_k} S^{k})$ is essentially a restatement of the result in Theorem \ref{wedge theorem}, but with the inclusion of lower-dimensional spheres. Panel B in Figure \ref{fig:wedge} provides an example of the space $S^1\vee S^1\vee S^2 \vee S^2$, which can be represented by a network with architecture (2,24,10).
\end{proof}

This result is much more powerful as it provides a bound for the network size required to express a space with any possible Betti numbers. As we will see below it is very unlikely that this is the best possible bound. However, it is worth noting that its linear relation to Bettin numbers indicates that neural networks can be an efficient way to represent complex topological spaces.

\subsection{Exponential complexity in products of spheres}

The wedge sum bound is useful to demonstrate that any sequence of Betti numbers can be expressed by a neural network. However, it constructs spaces homotopy equivalent to wedge sums of spheres, but not to products of spheres and is provably less than optimal for certain Betti number sequences. 

To see this, take the torus $T^2 = S^1\times S^1$. It has $\beta_1=2$ and $\beta_2 = 1$ which, by the wedge sum bound, would require an architecture of $(2,16,7)$. But we can do much better if we take two networks of shape $(1,4,2)$ realizing $S^1$ and stack them together to get a larger network of shape $(2,8,4)$. Given the following lemma, this is a torus realized with less total neurons.

\begin{lemma}
\label{lemma:product}
    Let $\Phi_1:X\to Y$ and $\Phi_2: X'\to Y'$ be maps realized by networks of shape $(a,b,c)$ and $(a',b',c')$ respectively. The product map $\Phi_1\times \Phi_2$ is realized by a network with architecture $(a+a',b+b',c+c')$ and $(\Phi_1\times \Phi_2)(X\times X') = \Phi_1(X)\times \Phi_2(X')$ .
\end{lemma}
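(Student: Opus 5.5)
The proof is a direct block-diagonal construction, followed by a set-theoretic identity for the image. Write the two networks as $\Phi_1(x)=W_2\sigma(W_1x+b_1)+b_2$ and $\Phi_2(x')=W_2'\sigma(W_1'x'+b_1')+b_2'$. Here $W_1\in\mathbb{R}^{b\times a}$, $W_2\in\mathbb{R}^{c\times b}$, $W_1'\in\mathbb{R}^{b'\times a'}$, $W_2'\in\mathbb{R}^{c'\times b'}$, and $\sigma$ is the coordinatewise ReLU.

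\textbf{Realizing the product map.} On the concatenated input $(x,x')\in\mathbb{R}^{a+a'}$, define a network with first-layer weight matrix $\begin{pmatrix} W_1 & 0\\ 0 & W_1'\end{pmatrix}$ and bias $(b_1,b_1')$. Its second-layer weight matrix is $\begin{pmatrix} W_2 & 0\\ 0 & W_2'\end{pmatrix}$ with bias $(b_2,b_2')$.

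Because ReLU acts coordinatewise, $\sigma$ applied to a concatenated vector equals the concatenation of $\sigma$ applied to each block. The hidden layer therefore computes $(\sigma(W_1x+b_1),\sigma(W_1'x'+b_1'))$. The output is then $(\Phi_1(x),\Phi_2(x'))=(\Phi_1\times\Phi_2)(x,x')$.

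Counting neurons layer by layer gives the architecture $(a+a',b+b',c+c')$. This is the same stacking idea used earlier for hat functions. The difference is that there the input was shared, whereas here the input spaces are concatenated and the off-diagonal weights are zero.

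\textbf{Identifying the image.} We must show $(\Phi_1\times\Phi_2)(X\times X')=\Phi_1(X)\times\Phi_2(X')$, which we do by double inclusion.

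\emph{Forward inclusion.} Any point of the left side has the form $(\Phi_1(x),\Phi_2(x'))$ with $x\in X$ and $x'\in X'$. It therefore lies in $\Phi_1(X)\times\Phi_2(X')$.

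\emph{Reverse inclusion.} Take $(y,y')$ with $y=\Phi_1(x)$ and $y'=\Phi_2(x')$. Then $(x,x')\in X\times X'$ maps to $(y,y')$.

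The key point is that the domain is a full product, so $x$ and $x'$ can be chosen independently. This is where restricting to $X\times X'$, rather than an arbitrary subset of $\mathbb{R}^{a+a'}$, matters.

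\textbf{Topological consequence.} The product $\Phi_1(X)\times\Phi_2(X')$ carries the product topology as a subspace of $\mathbb{R}^{c}\times\mathbb{R}^{c'}$. Since homotopy equivalences are preserved under products, $\Phi_1(X)\simeq S^{d_1}$ and $\Phi_2(X')\simeq S^{d_2}$ give $\Phi_1(X)\times\Phi_2(X')\simeq S^{d_1}\times S^{d_2}$.

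\textbf{Main obstacle.} There is no real obstacle. The only point requiring care is the compatibility of coordinatewise ReLU with the block structure, together with the use of the product domain $X\times X'$ in the reverse inclusion. Iterating the lemma gives the $N$-fold product bound stated in the abstract.
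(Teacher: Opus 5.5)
Your proposal is correct and follows the paper's argument, stated more explicitly: block-diagonal weight matrices with concatenated biases, ReLU acting coordinatewise so the blocks compute independently, and the product domain letting $x\in X$ and $x'\in X'$ be chosen independently, which gives $(\Phi_1\times\Phi_2)(X\times X')=\Phi_1(X)\times\Phi_2(X')$. One small mismatch: the paper's sphere networks also apply a ReLU at the output neurons (e.g.\ $p_a=\max(0,\cdot)$), which your formula $W_2\sigma(W_1x+b_1)+b_2$ omits, but the same coordinatewise argument covers that case verbatim.
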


\begin{proof}
    Simply stack the two networks without connecting them in any way. This corresponds to a block diagonal weight matrix and concatenated biases. Each block acts independently on the input, so applying the function to the product is the same as taking the product of the function applied to $X$ and $X'$.
\end{proof}

\begin{corollary}
    For an arbitrary set of target spaces $(S^{d_1},...,S^{d_N})$ realized by architectures $(d_n,W_{d_n},d_n+1)$, we simply sum each architecture to get the total architecture realizing such a space. This gives $(\sum\limits_{n=1}\limits^N d_n,\sum\limits_{n=1}\limits^N W_{d_n},\sum\limits_{n=1}\limits^N d_n+N)$.

    For $S^1\times S^1$, we have $(1,4,2)$ repeated twice so we get $(2,8,4)$.
\end{corollary}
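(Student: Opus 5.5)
The plan is to obtain the corollary by iterating Lemma \ref{lemma:product} over the $N$ sphere-realizing networks. For each $n$, the earlier results give a map $\hat{P}^{(n)}:\mathbb{R}^{d_n}\to\mathbb{R}^{d_n+1}$ with $\hat{P}^{(n)}(X_n)\cong S^{d_n}$. Here $X_n=\bar{\Omega}_n$ is the closure of the Helly arrangement of Proposition \ref{Helly construction} in dimension $d_n$. Each such map is realized by a network of architecture $(d_n,W_{d_n},d_n+1)$, with $W_{d_n}=\min(2d_n(d_n+1),3d_n+2)$, by the proposition on realizing $P(x)$.

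I will argue by induction on $N$. The case $N=1$ is the sphere theorem itself. For the inductive step, suppose $\Phi_{N-1}=\hat{P}^{(1)}\times\cdots\times\hat{P}^{(N-1)}$ is realized by a network of architecture
\[
\Bigl(\sum_{n<N}d_n,\ \sum_{n<N}W_{d_n},\ \sum_{n<N}d_n+(N-1)\Bigr),
\]
and that it maps $X_1\times\cdots\times X_{N-1}$ onto $S^{d_1}\times\cdots\times S^{d_{N-1}}$. Applying Lemma \ref{lemma:product} with $\Phi_1=\Phi_{N-1}$ and $\Phi_2=\hat{P}^{(N)}$ adds the three architecture entries componentwise. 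The stacking is block diagonal, so the new network has input $\mathbb{R}^{\sum_n d_n}$, hidden width $\sum_n W_{d_n}$, and output $\sum_n (d_n+1)=\sum_n d_n+N$. The image is then $\Phi_{N-1}(\prod_{n<N}X_n)\times\hat{P}^{(N)}(X_N)$. The domain $\prod_n X_n$ is compact and contractible, being a product of closed star-shaped sets, so it fits the standing assumptions of the paper.

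It remains to pass from the images to the target space. A product of homeomorphisms is a homeomorphism onto the product, so the image is homeomorphic to $\prod_n S^{d_n}$. If one only uses that each image is homotopy equivalent to $S^{d_n}$, then a product of homotopy equivalences is still a homotopy equivalence, and the conclusion holds up to homotopy, which is enough for the Betti numbers. The Künneth formula then gives Poincaré polynomial $\prod_n(1+x^{d_n})$, because the integral homology of spheres is free and there are no Tor terms. This is what the abstract advertises.

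The only point needing care is the identification of the image of the product map with the product of the images. Lemma \ref{lemma:product} supplies this exactly because the domain is taken to be the product $X\times X'$ rather than an arbitrary subset of $\mathbb{R}^{a+a'}$. Everything else is bookkeeping. The example $S^1\times S^1$ is the case $N=2$, $d_1=d_2=1$, with $W_1=\min(4,5)=4$, giving $(2,8,4)$.
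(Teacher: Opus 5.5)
Your proposal is correct and takes the same approach as the paper: the corollary is Lemma \ref{lemma:product} applied repeatedly to the $N$ sphere-realizing networks, so the architecture entries add componentwise, and $W_1=4$ gives $(2,8,4)$. Your hedge is also worthwhile: the paper's nerve-theorem argument only yields $\hat{P}(X)\simeq S^{d}$, not a homeomorphism, so the product is only known up to homotopy equivalence, which is enough for the Künneth computation of the Betti numbers.
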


This construction a case in which linear scaling of network size leads to exponential scaling in the Betti numbers. From the Künneth formula on torsion-free spaces \citep{hatcher2005algebraic} (which spheres are) we have,

\begin{align*}
    H_k(X\times Y)\simeq\bigoplus_{i+j=k} H_i(X)\otimes H_j(Y).
\end{align*}

This implies that $\beta_k(X\times Y) = \sum\limits_{i+j=k}\beta_i(X)\beta_j(Y)$ which by writing out the Poincaré polynomials gives the generating functions $\mathcal{P}(X\times Y) = \mathcal{P}(X)\mathcal{P}(Y)$. For spheres, these are $\mathcal{P}(S^n) = 1+x^n$ so the total generating function for products of spheres $\bigtimes\limits_{n} S^{d_n}$ is,

\begin{equation*}
    \mathcal{P}(\mathop{\bigtimes}\limits_{n} S^{d_n}) = \prod\limits_{n}{(1+x^{d_n})}.
\end{equation*}

To see how the Betti numbers are generated by such polynomials, consider the space $S^1\times S^1 \times S^2$. Its Poincaré polynomial is given by $\mathcal{P}(S^1\times S^1 \times S^2) = (1+x)^2(1+x^2) = 1+2x+2x^2+2x^3+x^4 $. By reading off the coefficients of this polynomial we get the Betti numbers $\beta = (1,2,2,2,1)$. In principle, the sum of such Betti numbers scales exponentially with the number of spheres in the product, whereas the network size scales linearly. 
\begin{theorem}
    \label{thm: exp scaling}
    $\sum_k \beta_k(\mathop{\bigtimes}\limits_{N} S^1)$ can scale exponentially as $2^N$, whereas the corresponding architecture scales linearly as $7N$.
\end{theorem}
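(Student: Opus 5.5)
The argument combines three earlier ingredients: the $S^1$ network, Lemma~\ref{lemma:product}, and the Künneth formula with Poincaré polynomials.

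First, fix the building block. For $d=1$ the width bound is $W_1=\min(2\cdot1\cdot2,\,3\cdot1+2)=\min(4,5)=4$. So by the hat-function proposition and the theorem on $\hat P(X)\cong S^d$, there is a network of architecture $(1,4,2)$ whose image of a compact interval $X_1$ is homeomorphic to $S^1$.

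Second, apply Lemma~\ref{lemma:product} inductively $N-1$ times. Stacking $N$ copies with block-diagonal weights gives a network $\Phi^{(N)}$ of architecture $(N,4N,2N)$. It satisfies $\Phi^{(N)}(X_1^N)=\prod_{n=1}^N \Phi(X_1)\cong \bigtimes_N S^1$. The domain $X_1^N$ is a compact cube, hence contractible, so it fits the paper's standing assumptions. The homeomorphism with the product of circles follows because a product of homeomorphisms is a homeomorphism. Counting all neurons gives $N+4N+2N=7N$, which is the linear scaling claimed. Equivalently, each of the three layer widths is linear in $N$, consistent with the corollary's $(\sum d_n,\sum W_{d_n},\sum d_n+N)$ with $d_n=1$.

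Third, compute the Betti numbers. Spheres are torsion-free, and so are products of them by induction. Hence the Künneth formula stated above applies at each step. The Poincaré polynomial is $\mathcal{P}(\bigtimes_N S^1)=(1+x)^N$, so $\beta_k=\binom{N}{k}$. Evaluating at $x=1$ gives $\sum_k\beta_k=2^N$, exponential in $N$, while the architecture grows as $7N$.

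The only step needing care is the induction in the Künneth step, where one must check that torsion-freeness is preserved. For integer coefficients the Tor terms vanish because $H_*(S^1)$ is free and $H_*(T^{N-1})$ is free by the induction hypothesis. An alternative is to work over a field, which gives the same Betti numbers directly. The phrase "scales linearly as $7N$" must also be read as the total neuron count. I would state this convention explicitly, since the individual layer widths $N,4N,2N$ are each linear anyway.
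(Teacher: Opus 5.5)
Your proposal is correct and follows the paper's proof: the $(1,4,2)$ circle network is stacked $N$ times via Lemma~\ref{lemma:product} to give $(N,4N,2N)$, i.e.\ $7N$ neurons, and the Poincar\'e polynomial $(1+x)^N$ has coefficients $\binom{N}{k}$ summing to $2^N$. Your extra remarks make explicit what the paper leaves implicit: why integer-coefficient K\"unneth applies at each inductive step, and why the product of images is a product of circles. Only the homotopy type of that product is needed for the Betti numbers.
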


\begin{proof}
    Take the $N$-torus given by $\bigtimes^N S^1 = T^N$, its Poincaré polynomial is given by $(1+x)^N$. The coefficients of such a polynomial are the binomial coefficients $\beta_k = C(N,k)$ and the sum of binomial coefficients is $\sum C(N,k) = 2^N$.

    The corresponding architecture for one circle is $(1,4,2)$ and by Lemma \ref{lemma:product} after stacking $N$ copies of it we get $(N,4N,2N)$ which can realize an $N$-torus. So the number of neurons scales as $7N$ which is linear.
\end{proof}

The fact that exponential growth in Betti numbers is realized by linear growth in the number of neurons shows that it is quite easy to realize spaces with high representational topological complexity. However, it is worth pointing out that this bound only works on palindromic ($\beta_n = \beta_{\sum d_n-n}$) Betti numbers. The wedge bound applies for any possible sequence of them, but also requires an exponential number of neurons since the Betti numbers appear in the bound. Finally, we state these results as a theorem analogous to Theorem \ref{thm: wedge sum networks} for products of spheres.
\begin{theorem}
    A neural network of shape $(\sum\limits_{n=1}\limits^N d_n, \sum\limits_{n=1}\limits^N W_{d_n}, \sum\limits_{n=1}\limits^N d_n + N)$ can express a space with Betti numbers given by the coefficients of the Poincaré polynomials, $\mathcal{P}(\mathop{\bigtimes}\limits_{n} S^{d_n}) = \prod\limits_n(1+x^{d_n})$. Additionally, for this network $\textbf{P}(\mathbb{R}^{\sum d_n}) \cong \mathop{\bigtimes}\limits_{n} S^{d_n}$.
\end{theorem}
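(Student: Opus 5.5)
The plan is to assemble the network from the single-sphere building blocks and then read off the homology with the Künneth formula. For each $n$, take the single-sphere map $\hat{P}^{(n)}:\mathbb{R}^{d_n}\to\mathbb{R}^{d_n+1}$ built from the $d_n+1$ hat functions in the Helly arrangement of Proposition \ref{Helly construction}. By the earlier proposition, this map is realized by a network of architecture $(d_n,W_{d_n},d_n+1)$. We restrict it to $X_n=\bar{\Omega}_n$, or equivalently to any compact contractible set containing $\bar{\Omega}_n$, since $\hat{P}^{(n)}$ vanishes outside $\Omega_n$. The theorem on $\hat{P}(X)$ then gives $\hat{P}^{(n)}(X_n)\cong S^{d_n}$.

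Next we form the product map $\textbf{P}=\hat{P}^{(1)}\times\cdots\times\hat{P}^{(N)}$ on $\mathbb{R}^{\sum d_n}=\prod_n\mathbb{R}^{d_n}$. Applying Lemma \ref{lemma:product} inductively on $N$ gives two things. First, the architectures add, yielding $(\sum d_n,\sum W_{d_n},\sum (d_n+1))$; this is the stated shape. Second, the image of a product domain is the product of the images:
\begin{equation*}
\textbf{P}\Big(\prod_n X_n\Big)=\prod_n \hat{P}^{(n)}(X_n)\cong \prod_n S^{d_n}.
\end{equation*}
The homeomorphism holds because a product of homeomorphisms is a homeomorphism onto the product. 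The same equality holds on all of $\mathbb{R}^{\sum d_n}$, because each factor satisfies $\hat{P}^{(n)}(\mathbb{R}^{d_n})=\hat{P}^{(n)}(X_n)$.

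The Betti numbers then follow from the Künneth formula quoted above. Spheres have free homology, so the Tor terms vanish, and induction on $N$ gives $\mathcal{P}(\prod_n S^{d_n})=\prod_n(1+x^{d_n})$. Homotopy invariance then transfers this to $\textbf{P}(\mathbb{R}^{\sum d_n})$.

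The main obstacle is that the homeomorphism claim rests on the single-sphere statement $\hat{P}(X)\cong S^d$. The earlier theorem's sketch actually only establishes homotopy equivalence via the nerve theorem. If only $\simeq$ is available, I would state the conclusion as $\textbf{P}(\mathbb{R}^{\sum d_n})\simeq\prod_n S^{d_n}$, which suffices because products preserve homotopy equivalences.

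To upgrade to $\cong$, I would first try the quotient argument. $\bar{\Omega}$ is a compact star-shaped polytope, hence homeomorphic to $D^d$, and $\hat{P}$ is a continuous map from a compact space to a Hausdorff space, hence closed. It collapses $\partial\Omega$ to $0$. It then remains to check that $\hat{P}$ is injective on $\Omega$, so that $\hat{P}$ factors through a continuous bijection $D^d/S^{d-1}\to\hat{P}(X)$, which is a homeomorphism by compactness. Injectivity on $\Omega$ is the delicate point. I would argue it piecewise on the linear regions: the $d+1$ hat-function values have affinely independent gradients on each region where all are positive. Where some coordinates vanish, I would use the remaining coordinates together with region-wise linear independence. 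I expect this region-by-region verification to be the genuinely hard step, and it is the point at which I would fall back to homotopy equivalence if it fails.
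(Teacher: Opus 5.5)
Your main argument is the paper's: stack the single-sphere networks block-diagonally via Lemma~\ref{lemma:product}, add the $(d_n,W_{d_n},d_n+1)$ blocks as in the corollary, use that the image of a product domain is the product of the images, and apply Künneth to get $\prod_n(1+x^{d_n})$. Your caveat is also right. The appendix establishes the single-sphere input only through the nerve theorem, so what this argument actually delivers is $\textbf{P}(\mathbb{R}^{\sum d_n})\simeq\prod_n S^{d_n}$. That is all the Betti-number claim needs.

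You should drop the planned upgrade to a homeomorphism, because it cannot succeed. $\hat{P}$ is not injective on $\Omega$, and its Jacobian is not of full rank on every region where all $p_k>0$. On the closed negative orthant, each $p_k$ with $k\le d$ equals $\max(0,\frac12+\sum_l x_l)$, so a $d$-dimensional region lands in the plane $y_1=\dots=y_d$. For $d=2$, with centers $(\frac12,0)$, $(0,\frac12)$, $(-\frac14,-\frac14)$, one also has $p_3=\frac12-x_1-x_2$ on $[-\frac14,0]^2\subset\Omega$, so this whole square collapses to a segment. The Contractible core lemma in the appendix in fact works explicitly with distinct preimages $x\neq x'$ of a single point.

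The failure is not only of your method. For $d=2$, the part of the negative quadrant with $\frac14\le\|x\|_1\le\frac12$ maps onto the two-dimensional triangle with vertices $(\frac14,\frac14,\frac34)$, $(0,0,\frac12)$, $(0,0,1)$, lying in the plane $y_1=y_2$. Near its edge $\{(\frac12-v,\frac12-v,1-v):\frac14\le v\le\frac12\}$, the quadrants $\{x_1\ge 0\ge x_2\}$ and $\{x_2\ge 0\ge x_1\}$ map injectively onto half-discs bounded by that same edge. These half-discs lie in $y_1>y_2$ and $y_1<y_2$ respectively; for example, $(u,-v)\mapsto(\frac12+u-v,\frac12-u-v,1-u-v)$.

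Three sheets meeting along an edge means $\hat{P}(X)$ is not a $2$-manifold, so $\hat{P}(X)\not\cong S^2$. Already the case $N=1$, $d_1=2$ shows that the $\cong$ in the statement holds only as $\simeq$. State and prove the homotopy-equivalence version; with that reading your proof is complete.
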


\begin{remark}
    Notice that we can also get bounds for products of wedge sums. Poincaré polynomials have the property that $\mathcal{P}(X\vee Y) = \mathcal{P}(X)+\mathcal{P}(Y)-1$. Combined with the product rule from the Künneth formula $\mathcal{P}(X\times Y) = \mathcal{P}(X)\mathcal{P}(Y)$, the Betti numbers of products of wedge sums are given by, 
    \begin{align*}
        \mathcal{P}(\mathop{\bigtimes}\limits_{r}\bigvee\limits_{n=1}^{N_r} S^{d^r_n}) &= \prod\limits_{r}( \sum\limits_{n=1}\limits^{N_r}\mathcal{P}(S^{d^r_n})-(N_r-1)) \\
        &= \prod\limits_{r}( \sum\limits_{n=1}\limits^{N_r}(1+x^{d_n^r})-(N_r-1)).
    \end{align*}

    With $\beta^r_k = \#\{n: d^r_n=k\}$ being the number of copies of $k$-dimensional spheres in the $r$-th factor, the corresponding architecture realizing such a space is $(\sum_r \max_n(d^r_n), \sum_r \sum_k \beta^{r}_k W_{k}, \sum_r \sum_k \beta^{r}_k (k+1))$
\end{remark}

\subsection{Conjectured tight and lower bounds}

So far we have only provided constructive upper bounds on representational topological expressivity. Two natural questions to ask are how tight these bounds are and whether we can also come up with lower bounds. Regarding the former, it is clear to see that the wedge sum bounds are not tight as we could do much better with the product bounds. The more interesting and tractable question is whether the bounds for the sphere construction are tight enough. The input $d$ and output layers $d+1$ are fixed so this amounts to arguing about the tightness of $W_d = \min(2d(d+1),3d+2)$. With the help of Claude Opus 5.5, we explored alternative architectures that might realize a sphere and empirically found that for $d=2,3$ there is a construction of an architecture of shape $(d,d+1,d+1)$ that does this. 

\begin{conjecture}
    Denote hidden neurons by $\{h_1(x),...,h_{d+1}(x)\}$ and output neurons by $\{p_1(x),...,p_{d+1}(x)\}$, the following equations provide a tight upper bound $(d,d+1,d+1)$ on architectures expressing the sphere $S^d$.
    
    \begin{align*}
        &h_i(x) = \max(0,x_i) \quad \mathrm{ for } \quad i\neq (d+1), \\
        & h_{d+1}(x) = \max(0,1-\sum^{d+1}_{i=1} x_i),\\
        &p_i(x) = \max(0, h_i(x)+2(1-\sum_{j=1}^{d+1} h_j(x)).
    \end{align*}

    No architecture of shape $(d,d,d+1)$ can express such a sphere. With this bound, the wedge sum bound becomes $(d,\sum_k^d \beta_k(k+1),\sum_k^d \beta_k(k+1))$ and the product bound becomes $(\sum\limits_{n=1}\limits^N d_n, \sum\limits_{n=1}\limits^N d_n + N,\sum\limits_{n=1}\limits^N d_n + N)$.
\end{conjecture}

\begin{figure*}
    \centering
    \includegraphics[width=1\linewidth]{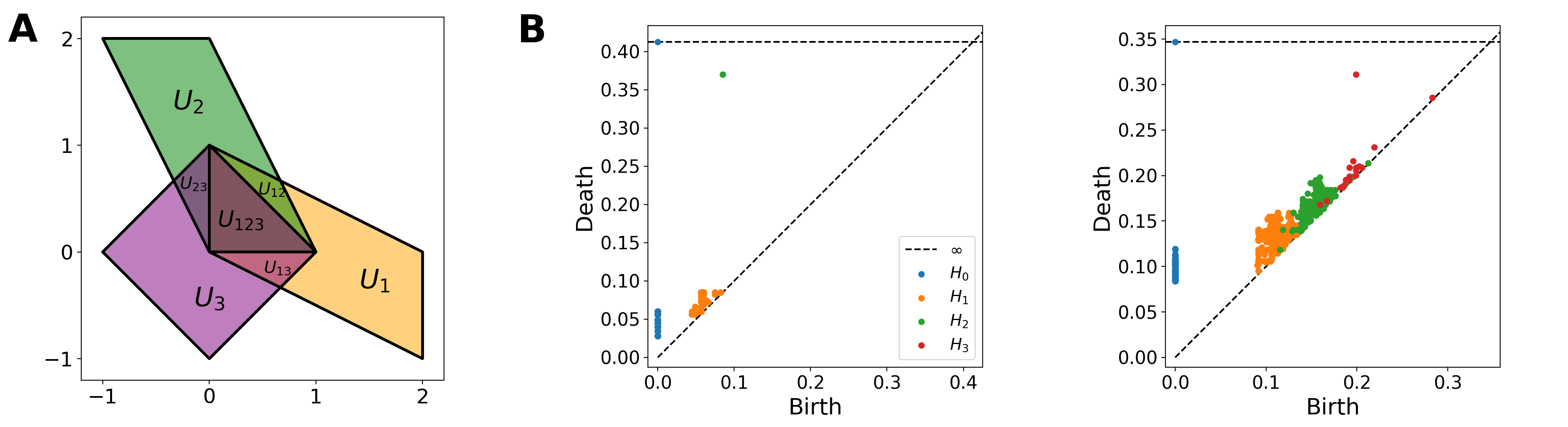}
    \caption{\textbf{A.} The active regions of the functions defining the proposed construction. \textbf{B.} The birth-death diagrams for the bottom four homology groups of $S^2$ (left) and $S^3$ (right).}
    \label{fig:tight construction}
\end{figure*}

One can see how the supports of these functions look in Figure \ref{fig:tight construction} and it is worth pointing out that their intersection pattern is exactly the same as the construction we proved in Theorem \ref{Main theorem: good cover}. While we are unable to prove this bound for spheres of arbitrary dimension in this work, we provide some empirical evidence to support its validity. For $d=2$ we applied the network to $151^2$ points from a regular mesh on $[-1,2]^2$ and for $d=3$ we used $81^3$ points on $[-1,2]^3$. We then applied the construction on the mesh and computed the persistent homology of the output using Ripser \cite{bauer_ripser_2021} with 800 landmarks. As one can see, the birth-death diagrams in panel B of Figure \ref{fig:tight construction} clearly reflect the structure of the sphere with $d=2,3$. While this is not a proof of the conjecture, it demonstrates that this construction likely works for $d<4$. We believe that the conjecture can be rigorously proven in future work.

\section{Discussion}

We have derived, what is to our knowledge, the first bounds on the representational topological expressivity of neural networks. This is in contrast to the many known results for the topological expressivity of decision boundaries. Below we discuss some consequences of our work, along with some ideas for future directions.

\subsection{Topological hierarchy for neural networks}

The bound that we derive can be easily adapted to classify networks into different complexity classes in terms of the number of neurons or parameters. For example, if we fix $d>1$ one needs $(d,3d+2,d+1)$ neurons to represent a d-sphere, then it is clearly easier to express a $d'$-sphere for $d'<d$. On the other hand, representing $S^d$ requires a total of $(3d+2)(2d+1)$ parameters (biases excluded), whereas $S^{d-1}\vee S^{d-1}$ requires $2(3d-1)^2$ parameters. The difference between them becomes $-d(12d-19)$. Therefore $S^{d-1}\vee S^{d-1}$ is easier to express while $d<2$, equally difficult to express for $d=2$ and harder to express while $d\geq 3$. Products $S^{d-1}\times S^{d-1}$ require $4(3d-1)(2d-1)$. The difference between products and wedge sums gives $2(d-1)(3d-1)>0$ implying that that products between high dimensional spheres are harder to express than wedge sums. So while it is easy to get exponential Betti numbers through products, spaces homotopy equivalent to products of spheres are harder to express than wedge sums.

We can generalize this and relate any family of wedges or products of spheres and describe how easy they are to express through the provided bounds. More research is required to gain a better understanding of the relationships between spaces and the size of the networks that can express them. However, it is important to keep in mind that these bounds are likely not optimal and if the conjectured $(d,d+1,d+1)$ bound applies the theory would be significantly simplified.

\subsection{Topological guarantees for learning}
Another way to leverage our results is as a theoretical guarantee that a particular architecture is sufficient to perform a task. This can be used when training networks to generate data with a particular topology as in \cite{bruel-gabrielsson_topology_2020} or when one wants to enforce particular geometric features in the output of a network. 

However, note that we prove that a network that can express arbitrary Betti numbers exists, not that it is consistently reachable through algorithms such as gradient descent and backpropagation. As noted in \cite{hanin2019deep} the class of functions that a network can learn from an initialization is typically smaller than the number of functions that a network can express. Therefore our result might be used as a lower bound on the architecture that can learn functions with arbitrary Betti numbers in their output. Additional work is needed to clarify the relation between learning and representational topological expressivity.

\subsection{Different input spaces}
Due to analytical tractability we have assumed that our input space is a compact contractible set in $\mathbb{R}^d$, this is often the case in practical applications of machine learning. For example, this is true if the data comes from a Gaussian distribution on $\mathbb{R}^d$ or from a uniform  distribution in some box $[-a,a]^d$. It does not apply when the data is normalized as then the input lives on a sphere. In such cases our theory would need to be adapted to that setting. 

It is also possible that the data comes from disjoint spaces, which is often the starting assumption in classification tasks. In such settings the standard notion of topological expressivity, which studies the topology of decision regions, is likely more applicable. If one is interested in non-classification tasks on such spaces, then our theory would have to also be extended to deal with $\tilde{H}_0(\Phi(X))$. We have ignored this homology group since ReLU networks are continuous PWL functions and therefore cannot cut the connected input domain and it is always true that $\tilde{H}_0(\Phi(X))=0$.

\subsection{Other invariants}
Our results rely on showing that a particular construction, realizable by a ReLU neural network, is homotopy equivalent to wedge sums and products of spheres. We then used that fact to derive a bound for the Betti numbers, which track the number of holes in a space. Homotopy equivalence is a stronger property than the Betti numbers, but the features it encodes are harder to interpret. Therefore it will be useful to consider other finer invariants of topological structure.

For example homotopy equivalence between spaces implies that they have the same cohomology ring structure. Therefore our results could be reinterpreted to classify spaces based on their cohomology ring. This is enough to differentiate the torus from the wedge space $S^2\vee S^1\vee S^1$ and our product bounds for the torus are not an improvement on the network architecture needed to express the wedge sum of spheres, but is an independent bound for tori.

Another potential angle to approach the problem of expressivity comes from tropical geometry. ReLU neural networks have been shown to be equivalent to tropical rational functions \cite{zhang2018tropical, brandenburg2024real} and there are other works connecting varieties to network function \cite{marchetti2025algebra}. This perspective implies that it might be possible to extend tools from algebraic geometry to study properties of neural representations that are not simply related to counting holes.

\section*{AI usage disclosure}
Claude Mythos and Opus (Anthropic) models have been used for literature search, text refinement and conducting the research. Specifically, we used Claude to come up with better bounds for our construction and the LLM discovered the $(d,3d+2,d+1)$ bound and proposed the conjectured tight $(d,d+1,d+1)$ bound. We also used these models to discuss and refine the proofs in the appendix.

\bibliography{example_paper}
\bibliographystyle{icml2026}

\newpage
\appendix
\onecolumn

\section{Proofs}
\label{Appendix}
Before delving into the proofs we summarize the notation we use. 
The full set of indices is given by $I=\{1,...,d+1\}$, we will denote $T=I\setminus \{d+1\}$ and use $J$ to refer to arbitrary subsets of $I$. It is convenient to write $q_j(x) = 1-\|x-a_j\|_1$ to denote the function inside the ReLU $p_j(x) = \max(0,q_j(x))$ and denote $p_j(x) = y_j$.  Coordinates in the image space are given as $y_1,...,y_{d+1}$, the set of active coordinates is given by $A(x) = \{k\in T: p_k(x)>0\}$ and its cardinality is $\alpha=|A(x)|$. The closed strata in image space are given by $\bar{R}_{J} = \{y \in \hat{P}(X) : y_j=0 \text{ } \forall j\notin J\}$. We call the set $\bar{R}_{I\setminus i} = P(\mathbb{R}^d\setminus U_i)$ the \textit{$i$-th facet} and the set $\bar{V}_I=P(\bar{U}_I)$ the \textit{core}. It is worth noting that $\bar{R}_J\cap\bar{R}_K=\bar{R}_{J\cap K}$ and $\bar{R}_J=\bigcap_{i\notin J} \bar{R}_{I\setminus i}$ which shows that intersections between the facets can produce any closed stratum. 

\begin{theorem}
    \label{Main theorem: good cover}
    The cover $C=(\bigcup\limits_{i \in I}\bar{R}_{I\setminus i})\bigcup\bar{V}_I$ is a good cover of $\hat{P}(X)$.
\end{theorem}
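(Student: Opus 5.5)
The plan is to verify the two defining properties of a good cover directly. First, the sets of $C$ cover $\hat{P}(X)$. Second, every nonempty finite intersection of members of $C$ is contractible. Then I would feed this into Borsuk's nerve theorem in the form valid for finite closed covers of compact polyhedra; $\hat{P}(X)$ is a compact polyhedron because $\hat{P}$ is PWL. The covering property is immediate. Take $y=\hat{P}(x)$. Either some coordinate vanishes, $y_i=p_i(x)=0$, and then $y\in\bar{R}_{I\setminus i}$. Or all coordinates are positive, and then $x\in\bigcap_i U_i=U_I$, so $y\in\bar{V}_I$.

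Next I would reduce the family of intersections to two types, using the identities noted above: $\bar{R}_J\cap\bar{R}_K=\bar{R}_{J\cap K}$ and $\bar{R}_J=\bigcap_{i\notin J}\bar{R}_{I\setminus i}$. Every intersection is then either a closed stratum $\bar{R}_J$ with $J\neq I$, or a set $\bar{V}_I\cap\bar{R}_J$.

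The key tool is a domain description of the strata. Since $y_i=p_i(x)=0$ exactly when $x\notin U_i$, we have
\begin{equation*}
\bar{R}_J=\hat{P}\Bigl(X\setminus\bigcup_{i\notin J}U_i\Bigr),
\end{equation*}
which is the analogue of Lemma \ref{semilattice lemma} for complements. The empty stratum $\bar{R}_\emptyset$ is the single point $0$.

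For the core, I would show that $\hat{P}$ is injective on $\bar{\Omega}$. I would do this region by region using the active-set bookkeeping $A(x)$, $\alpha=|A(x)|$. On each cell of the common linear refinement of the $L_1$-balls, $\hat{P}$ is affine. With the explicit centers of Proposition \ref{Helly construction}, one can solve for $x$ from the $d+1$ coordinates $y_j=1-\|x-a_j\|_1$ of the active functions. Granting injectivity, $\bar{V}_I\cong\bar{U}_I$, which is convex and hence contractible. Moreover, no point outside $\bar{U}_I$ is mapped into $\bar{V}_I$, so
\begin{equation*}
\bar{V}_I\cap\bar{R}_J=\hat{P}\Bigl(\bar{U}_I\cap\bigcap_{i\notin J}\partial U_i\Bigr).
\end{equation*}
This is the image of an intersection of a convex polytope with faces of convex $L_1$-balls, i.e.\ of a convex set (a face of $\bar{U}_I$). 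It is therefore contractible whenever it is nonempty.

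For the strata $\bar{R}_J$, $J\neq I$, I would prove they are star-shaped about $0$ inside $\hat{P}(X)$. Scaling $y\mapsto ty$ for $t\in[0,1]$ preserves the zero pattern, so it suffices to show that $\hat{P}(X)\cap\{y_i=0,\ i\notin J\}$ is closed under scaling toward the origin. Concretely, for each $x$ with $p_i(x)=0$ for $i\notin J$ and each $t$, I would exhibit a point $x_t$ outside the same balls with $\hat{P}(x_t)=t\hat{P}(x)$. The first attempt is to move $x$ outward along a suitable piecewise-linear path, such as the ray from the point $0\in U_I$ or a path toward the nearest $\partial\Omega$, and use continuity plus the intermediate value theorem on each affine cell. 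Then $(y,t)\mapsto ty$ is a contraction to $0$.

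I expect this last step to be the main obstacle. The hat functions are not monotone along rays, so the naive radial push may enter a ball $U_i$ with $i\notin J$ or fail to scale all coordinates proportionally. If it fails, my fallback is to construct the contraction in the domain: a deformation retraction of $X\setminus\bigcup_{i\notin J}U_i$ onto $\partial\Omega$ that stays in that set, obtained as a PL flow cell by cell. I would then use the injectivity of $\hat{P}$ on $\Omega$ so that the homotopy descends to the image, with $\partial\Omega$ collapsed to $0$. This injectivity check on each linear cell is the computational heart of the whole argument.
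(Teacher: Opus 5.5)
Your proposal has a genuine gap: it rests on injectivity of $\hat{P}$ (on $\Omega$; on $\bar{\Omega}$ it fails trivially since $\partial\Omega\mapsto 0$), and this is false for every $d\ge 2$, already inside the core.

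\textbf{Why injectivity fails.} Write $S=\|x\|_1$. On $\bar{U}_I=\{S\le\tfrac{1}{2}\}$ one has $p_k=\tfrac{1}{2}-S+2x_k^+$ for $k\in T$, and $p_{d+1}=\tfrac{1}{2}-S+2\sum_l\min(x_l^-,\tfrac{1}{2d})$. On the cube $(-\tfrac{1}{2d},0)^d\subset U_I$ these become $p_k=\tfrac{1}{2}-S$ for all $k\in T$ and $p_{d+1}=\tfrac{1}{2}+S$. So a $d$-dimensional open set is crushed onto a segment. Concretely, for $d=2$, the points $x=(-0.1,-0.1)$ and $x'=(-0.05,-0.15)$ both map to $(0.3,0.3,0.7)$.

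\textbf{Other false claims.} Your claim that no point outside $\bar{U}_I$ lands in $\bar{V}_I$ is also false. For $d=2$, the whole segment $\{(t,t):t\in[\tfrac{1}{4},\tfrac{1}{2}]\}$ maps to the single point $(\tfrac{1}{2},\tfrac{1}{2},0)=\hat{P}(\tfrac{1}{4},\tfrac{1}{4})\in\bar{V}_I$. Your formula $\bar{V}_I\cap\bar{R}_J=\hat{P}(\bar{U}_I\cap\bigcap_{i\notin J}\partial U_i)$ does hold, directly from $\bar{V}_I=\hat{P}(\bar{U}_I)$. However, for $i\in T$ the set $\bar{U}_I\cap\partial U_i=\{S=\tfrac{1}{2},\ x_i\le 0\}$ is a union of $2^{d-1}$ facets of the cross-polytope. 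It contains $\pm\tfrac{1}{2}e_j$ for $j\neq i$ but not their midpoint $0$, so it is neither a face nor convex.

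\textbf{Consequences.} The image of a contractible set under a non-injective PL map need not be contractible. A domain homotopy descends to the image only if it carries fibres of $\hat{P}$ into fibres. So your arguments for $\bar{V}_I$ and for $\bar{V}_I\cap\bar{R}_J$ both fail. Your fallback for the strata also fails, since it likewise descends a domain homotopy via injectivity. Your primary star-shapedness plan for the strata is, as you say yourself, not carried out.

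\textbf{What is missing.} You need an argument that controls the fibres of $\hat{P}$, or that builds the homotopies directly in image coordinates.
\begin{itemize}
\item \textbf{Core.} The paper controls the fibres. From the formulas above, $S=\tfrac{1}{2}-\min_i y_i$ and $x^+$ are determined by $y$. So two preimages of the same $y$ differ only on their common set $Z$ of nonpositive coordinates, with equal $\ell_1$-mass there. Hence the straight-line contraction toward $a_{d+1}$ maps fibres to fibres and descends through the closed surjection $\hat{P}\times\mathrm{id}$.
\item \textbf{Strata with $d+1\notin J$.} The paper identifies $\bar{R}_J=\mathrm{conv}\{0,e_k:k\in J\}$. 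It does this by bounding $\sum_{k\in T}p_k\le 1$ and solving explicitly for preimages.
\item \textbf{Strata with $d+1\in J$.} The Lowering lemma uses a domain flow that fixes $y_T$ and decreases $p_{d+1}$. This shows $\bar{R}_J$ is a lower set in the $y_{d+1}$ direction. The vertical homotopy $(y_T,y_{d+1})\mapsto(y_T,(1-t)y_{d+1})$ then retracts $\bar{R}_J$ onto a simplex.
\item \textbf{$\bar{V}_I\cap\bar{R}_J$ with $d+1\in J$.} The paper uses a vertical homotopy onto the graph of $z^*(y)=\min(1-\sum y_l,\tfrac{1}{d})$ over a simplex, justified by explicit segments in the domain.
\end{itemize}
Some argument of this kind, one that respects the many-to-one nature of $\hat{P}$, is what your proposal lacks.
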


Every $y\in \hat{P}(X)$ either has some $y_i=0$ and is in $\bar{R}_{I\setminus i}$ or has all $y_i>0$ and is in $\bar{V}_I$. Thus $C$ is a cover. The difficult part of the proof is in showing that it is a good cover or that $\bar{R}_{I\setminus i}\cap \bar{R}_{I\setminus j}$ and $\bar{V}_I \cap \bar{R}_{I\setminus i}$ are contractible. Then by using the fact that the full intersection is empty $\bar{R}_{I\setminus 1}\cap...\cap\bar{R}_{I\setminus d+1}\cap\bar{V}_I = \emptyset$, we can use the nerve argument to conclude that the image is homotopy equivalent to a sphere.

For convenience we will split the proof of the good cover property into three lemmas proving contractibility of $\bar{R}_J$ with $d+1\notin J$ \ref{Simplex lemma}, $\bar{R}_J$ with $d+1\in J$ and $\bar{V}_I\cap \bar{R}_J$. First we introduce some more useful notation and identities.

\begin{lemma}
    \label{facts}
    Define $g(t) = |t-\tfrac{1}{2}|-|t|$, $h(t) = |t+\frac{1}{2d}|-|t|$ and $\|x\|_1=S$, then the following facts hold.
    \begin{enumerate}
        \item $q_k(x) = 1-S-g(x_k)$.
        \item $q_{d+1}(x) = 1-S-\sum_l h(x_l)$.
        \item If $S\geq \tfrac{1}{2}$, then $x_k>0$ for every $k\in A(x)$.
        \item If some $p_k(x) =0$, then $S\geq \frac{1}{2}$ and $x_k>0$ for every $k\in A(x)$.
    \end{enumerate}
\end{lemma}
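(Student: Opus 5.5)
The plan is to verify all four items by direct computation from the explicit centers of Proposition \ref{Helly construction}, namely $a_k=\tfrac12 e_k$ for $k\in T$ and $a_{d+1}=-\tfrac{1}{2d}\sum_l e_l$. The only extra tool needed is the triangle inequality, which gives the range bounds $|g(t)|\le \tfrac12$ and $|h(t)|\le\tfrac{1}{2d}$.

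For item 1, take $k\in T$. The center $a_k$ differs from $0$ only in coordinate $k$, so
\begin{equation*}
\|x-a_k\|_1=\sum_{l\neq k}|x_l|+|x_k-\tfrac12| = S-|x_k|+|x_k-\tfrac12| = S+g(x_k).
\end{equation*}
Hence $q_k(x)=1-S-g(x_k)$. For item 2, every coordinate of $a_{d+1}$ equals $-\tfrac{1}{2d}$, so $|x_l+\tfrac{1}{2d}|=|x_l|+h(x_l)$. Summing over $l$ gives $\|x-a_{d+1}\|_1=S+\sum_l h(x_l)$, which is the claimed formula.

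For item 3, I use the explicit shape of $g$: it equals $\tfrac12$ for $t\le 0$, equals $\tfrac12-2t$ on $[0,\tfrac12]$, and equals $-\tfrac12$ for $t\ge\tfrac12$. In particular $g(t)<\tfrac12$ exactly when $t>0$. Now let $k\in A(x)$. Then $q_k(x)>0$, and by item 1 this reads $g(x_k)<1-S$. If $S\ge\tfrac12$, then $1-S\le\tfrac12$, so $g(x_k)<\tfrac12$, and therefore $x_k>0$.

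For item 4, suppose $p_j(x)=0$ for some $j\in I$, i.e.\ $q_j(x)\le 0$. There are two cases.
\begin{itemize}
\item If $j\in T$, item 1 gives $S\ge 1-g(x_j)\ge 1-\tfrac12=\tfrac12$, using $g\le\tfrac12$.
\item If $j=d+1$, item 2 gives $S\ge 1-\sum_l h(x_l)\ge 1-d\cdot\tfrac{1}{2d}=\tfrac12$, using $h\le\tfrac{1}{2d}$.
\end{itemize}
In either case $S\ge\tfrac12$, and item 3 then yields $x_k>0$ for every $k\in A(x)$.

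I do not expect a real obstacle here, since everything is bookkeeping with absolute values. The step needing care is item 4 in the case $j=d+1$. There the uniform bound on $h$ must be summed over all $d$ coordinates, and the factor $\tfrac{1}{2d}$ in the last center is exactly what makes this sum at most $\tfrac12$. It is also worth stating explicitly that the sharp characterization $g(t)<\tfrac12 \iff t>0$ is what turns a strict inequality on $q_k$ into strict positivity of $x_k$.
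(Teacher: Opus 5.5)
Your proposal is correct and follows essentially the same route as the paper. It uses the coordinatewise rewriting of $\|x-a_k\|_1$ and $\|x-a_{d+1}\|_1$ for items 1 and 2, the piecewise form of $g$ (so $g(t)<\tfrac12 \iff t>0$) for item 3, and the bounds $g\le\tfrac12$ and $\sum_l h(x_l)\le d\cdot\tfrac{1}{2d}=\tfrac12$ in the two cases of item 4, with your version slightly cleaner for stating the hypothesis as $q_j\le 0$ and renaming the vanishing index to avoid the clash with $k\in A(x)$.
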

\begin{proof}
    The first identity comes from the fact that we can write $q_k(x) = 1- \sum\limits_{j\neq k}|x_j|-|x_k|-|x_k-\tfrac{1}{2}|+|x_k| = 1-S-g(x_k)$.

    The second identity is similar namely, $q_{d+1}(x) = 1-\sum_l |x_l+\frac{1}{2d}| = 1+\sum_l|x_l|+\sum_l(|x_l+\frac{1}{2d}|-|x_l|) = 1-S-\sum_l h(x_l)$.

    For the third property, notice that $g(t)$ can be written in piecewise-linear form as,
    \begin{align*}
    g(t) = 
        \begin{cases}
            \tfrac{1}{2} \text{ if } t\leq 0,
            \\
            \tfrac{1}{2}-2t \text{ if } 0<t<\tfrac{1}{2},
            \\
            -\tfrac{1}{2} \text{ if } t\geq \tfrac{1}{2}.
        \end{cases}
    \end{align*}
    Then by the first identity we have $0< q_k(x) = 1-S-g(x_k)\leq \frac{1}{2}-g(x_k)$, so $g(x_k)<\tfrac{1}{2}$ which by the PWL form implies that $x_k>0$.

    The fourth property has two cases, when $k\neq d+1$, we have $p_k(x) = 1-S-g(x_k)\leq 0$ and $1-S-g(x_k)\geq \tfrac{1}{2}-S$ (the maximum of $g(t)$ is $\tfrac{1}{2}$). This implies $0\geq \tfrac{1}{2}-S$ so $S\geq \tfrac{1}{2}$. When $k=d+1$, we have $0\geq 1-S-\sum_lh(x_l)\geq\tfrac{1}{2}-S$ so the same conclusion follows. Combining this with fact 3, implies that $x_k>0$ for every $k\in A(x)$.
\end{proof}

Given these facts we now proceed to prove the first contractibility case. The goal of the proof is to establish that $\bar{R}_J$ with $d+1\notin J$ is the geometric realization of a simplex $\Delta^{\alpha}$ and is therefore contractible.
\begin{lemma}
    \label{Simplex lemma}
    (Simplex lemma) $\bar{R}_T = \Delta^d$ and $\bar{R}_J = \text{conv}\{0, e_k: k \in J\}=\{y_J\geq 0: \sum\limits_{J}y_k\leq1, \text{ } y_i=0 \text{ } i\notin J\}$. Every such $\bar{R}_J$ is contractible.
\end{lemma}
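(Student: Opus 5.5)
The plan is to prove the two inclusions $\bar{R}_J\subseteq \text{conv}\{0,e_k:k\in J\}$ and $\supseteq$ for every $J\subseteq T$. The case $J=T$ gives $\bar{R}_T=\Delta^d$. Contractibility is then immediate, since a convex set retracts linearly onto $0$.

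\emph{Inclusion $\subseteq$.} Take $y=\hat{P}(x)\in\bar{R}_J$. Then $y_i=0$ for $i\notin J$, and in particular $p_{d+1}(x)=0$ because $d+1\notin J$. Nonnegativity of the $y_k$ is automatic. By Lemma \ref{facts} (fact 4) we get $S=\|x\|_1\geq \tfrac12$ and $x_k>0$ for every active $k$. On active coordinates fact 1 combined with the piecewise form of $g$ gives
\begin{equation*}
y_k = \tfrac12 - S + 2\min(x_k,\tfrac12).
\end{equation*}
We then sum over the active set $A(x)$, of size $\alpha$, and split into cases.
\begin{itemize}
\item For $\alpha=0$ there is nothing to prove.
\item For $\alpha=1$ we use $\min(x_k,\tfrac12)\le\tfrac12$, giving $y_k\le \tfrac32-S\le 1$.
\item For $\alpha\ge 2$ we use $\sum_{k\in A}\min(x_k,\tfrac12)\le \sum_{k\in A}x_k\le S$, which gives $\sum_k y_k\le \tfrac{\alpha}{2}+S(2-\alpha)$. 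Since $2-\alpha\le 0$ and $S\ge\tfrac12$, this is at most $1$.
\end{itemize}
Hence $\sum_{k\in J} y_k\le 1$, which is the inclusion.

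\emph{Inclusion $\supseteq$.} For a given $y$ in the simplex with support $K\subseteq J$, I will write down an explicit preimage. The formula above suggests fixing $S=\tfrac12$ and setting $x_k=y_k/2$ for $k\in K$, which lies in $[0,\tfrac12]$ and so keeps $\min(x_k,\tfrac12)=x_k$. The remaining $L_1$ mass $r=\tfrac12(1-\sum_K y_k)\ge 0$ is placed on coordinates outside $K$. On those coordinates (and with $S=\tfrac12$) we need $q_i=\tfrac12-S-\ldots\le 0$, so that $p_i=0$ for $i\notin K$, and we also need $p_{d+1}(x)=0$.

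By fact 2, $q_{d+1}=\tfrac12-\sum_l h(x_l)$. Here $h(t)=\tfrac{1}{2d}$ for $t\ge0$, and $h(t)$ drops below $\tfrac{1}{2d}$ for negative $t$. Putting the leftover mass on nonnegative coordinates therefore gives $\sum_l h(x_l)=d\cdot\tfrac{1}{2d}=\tfrac12$, so $q_{d+1}=0$ and $p_{d+1}=0$. The mass placed on an inactive coordinate $i$ must also not make $p_i$ positive. This holds as long as that coordinate stays $\le 0$ or the total constraint $S=\tfrac12$ forces $q_i\le 0$; I will check both options against $g$.

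\emph{Main obstacle and fallback.} The delicate case is $K=T$, where every coordinate is already active and there is no free coordinate to absorb $r$. There I will instead let $S=\tfrac12+s$ with $s\ge 0$ and solve $x_k=(y_k+s)/2$ subject to $\sum_k x_k=S$, verifying the constraints $x_k\le\tfrac12$ and $p_{d+1}=0$ directly. If this bookkeeping fails at boundary points, the fallback is a topological argument. The vertices $0$ and $e_k$ are attained: $0$ at any point outside $\Omega$, and $e_k$ at $x=\tfrac12 e_k$ after checking that the other hats vanish there. The faces are attained inductively along the boundary arcs of the supports. One then argues, via continuity and compactness of the image of a closed star-shaped region, that the whole simplex is covered.
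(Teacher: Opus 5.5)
Your inclusion $\bar R_J\subseteq\text{conv}\{0,e_k:k\in J\}$ is correct and is essentially the paper's argument: fact 4 gives $S\ge\tfrac12$, and then $g(x_k)\ge\tfrac12-2x_k$ is summed over the active set.

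\textbf{The gap is in the reverse inclusion.} Your construction with $S=\tfrac12$ fails for every $y$ with $\sum_K y_k<1$, not only for $K=T$.
\begin{itemize}
\item If a free coordinate $i\in T\setminus K$ carries positive mass, then $q_i=\tfrac12-g(x_i)=2\min(x_i,\tfrac12)>0$, so $p_i>0$.
\item If it carries negative mass, then $h(x_i)<\tfrac{1}{2d}$, so $q_{d+1}=\tfrac12-\sum_l h(x_l)>0$ and $p_{d+1}>0$.
\end{itemize}
In fact, $S=\tfrac12$ together with $p_{d+1}=0$ forces $x\ge 0$, and hence $\sum_k y_k=2S=1$. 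So this ansatz only reaches the outer face of the simplex; compare Step 1 of the contractible intersection lemma. Concretely, for $d=2$ and $y=(0.3,0,0)$, your two choices $x=(0.15,0.35)$ and $x=(0.15,-0.35)$ give $p_2=0.7$ and $p_3=0.5$ respectively.

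The topological fallback does not close the gap. Continuity, compactness and hitting the vertices and lower faces do not imply that the image fills the simplex; a continuous map from a disk can land exactly on the boundary. You would need a degree-type argument, which you have not set up.

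\textbf{The missing idea} is to take $S>\tfrac12$, so that the negative offset $\tfrac12-S$ is what switches off the coordinates with $y_k=0$. Your $K=T$ ansatz $x_k=\tfrac12(y_k+s)$ with $s=\tfrac{1-S_P}{d-2}$ is exactly the paper's construction. For $d\ge 3$ it already handles every $y$ in the simplex at once:
\begin{itemize}
\item a coordinate with $y_k=0$ gets $x_k=s/2\ge 0$ and $q_k=0$, so it needs no separate treatment;
\item $x\ge 0$ together with $S\ge\tfrac12$ gives $p_{d+1}=0$;
\item the condition $x_k\le\tfrac12$ reduces to $(1-S_P)\bigl(\tfrac{1}{d-2}-1\bigr)\le 0$.
\end{itemize}
For $d\le 2$ this ansatz degenerates, since the constraint $\sum_k x_k=S$ forces $S_P=1$. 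There you must use a coordinate with $x_k\ge\tfrac12$, where $g=-\tfrac12$. The paper takes $x^*=(1-\tfrac{S_P}{2},\tfrac12-\tfrac{y_1-y_2}{2})$ for $d=2$ when $y_1\ge y_2$. For $d=1$, taking $x\in[\tfrac12,\tfrac32]$ covers $[0,1]$.
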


\begin{proof}
    There are two directions to prove, we begin with $\bar{R}_T\subseteq\Delta^d$. We write the sum $\sum_{k\in T} p_k = \alpha-\alpha S-\sum_{k\in A(x)} g(x_k)$.
    Fact 4 also implies that $x_k>0$ for $k\in A(x)$ and $\sum_{k\in A(x)} x_k \leq S$ is implied by $S$ being a norm. Therefore by using the inequality $g(x_k)\geq\frac{1}{2}-2x_k$ we get,
    \begin{equation*}
        \sum_{k\in T}p_k(x) \leq \alpha(1-S)-\frac{\alpha}{2}+2S=\frac{\alpha}{2} -(\alpha-2)S.
    \end{equation*}
    Then from the fact that $S(\alpha-2)\geq \tfrac{1}{2}(\alpha-2)$ we get $\sum_{k\in T} p_k(x) \leq\tfrac{\alpha}{2}-(\alpha-2)S\leq \tfrac{\alpha}{2}-\tfrac{(\alpha-2)}{2}=1$ which implies that the image is in the simplex. The bound is trivial when $\alpha\leq 1$.

    For the second direction $\Delta^d\subseteq\bar{R}_T$, let $y\in \Delta^d$ and $\sum_k y_k = \|y_T\|_1=S_P$ because all coordinates of the simplex are non-negative (we also have that $S_P\leq 1$). For each $y$ we have to pick a region in which $P$ is affine and then find an $x$ that realizes that $y$ while keeping $p_{d+1}(x)=y_{d+1}=0$. 

    For $d\geq 3$ take $x_k\in[0,\tfrac{1}{2}]$, from facts 1 and 2 along with the expansions of $g(x_k)$ and $h(x_k)$ each equation can be written as,
    \begin{align*}
        &q_k(x) = \frac{1}{2}-S+2x_k,
        \\
        &p_{d+1}(x) = \max(0,1-S - \frac{d}{2d})= \max(0,\frac{1}{2}-S)=0.
    \end{align*}

    From solving these equations we get $x_k =\tfrac{1}{2}(y_k+S-\tfrac{1}{2})$ and since $\sum x_k = S\geq\tfrac{1}{2}$ (fact 4 and $p_{d+1}(x)=0$), we can write $2S = S_P+(S-\tfrac{1}{2})d$. After rearranging, we get $S= \frac{(d/2)-S_P}{d-2}$ and therefore,
    \begin{equation*}
        x_k = \frac{1}{2}(y_k+\frac{d/2-S_P}{d-2}-\frac{1}{2}).
    \end{equation*}
    It remains to show that these solutions are in fact in the interval $[0,\tfrac{1}{2}]$. $x_k\geq 0$ follows from $S\geq\tfrac{1}{2}$. For $x_k\leq \tfrac{1}{2}$, we have $y_k +\frac{d/2-S_P}{d-2}\leq\frac{3}{2}$. The worst case scenario is when $y_k=S_P$ and the expression becomes $S_P(d-2)+\frac{d}{2}-S_P\leq\frac{3}{2}d-3$. After simplifying we get $S_P(d-3)\leq (d-3)$ and therefore $S_P\leq 1$ when $d>3$ (for $d=3$ we get 0). Therefore, for any $y=(y_T,0)$ we can construct an $x$ such that $P(x)=y$.

    For $d=2$ we can explicitly guess the solution $x^*=(1-\frac{S_P}{2},\tfrac{1}{2}-\tfrac{y_1-y_2}{2})$ when $y_1\geq y_2$. Then we get $x_1=1-\frac{S_P}{2}\geq\frac{1}{2}$ and thus $g(x_1) = -\tfrac{1}{2}$. From $y_1-y_2\in[0,1]$ we get $x_2=\frac{1}{2}-\frac{y_1-y_2}{2}\in[0,\frac{1}{2}]$ and $g(x_2) = \frac{1}{2}-2x_2$. Then the equations for $P$ become,

    \begin{align*}
        &p_1(x) = \frac{3}{2}-S \\
        &p_2(x) = \frac{1}{2}-S+2x_2\\
        &p_3(x) = \max(0,1-S) = 0.
    \end{align*}
    Then by symmetry, when $y_2>y_1$, the solutions are given by switching the arguments in $x^*$. 

    For $d=1$ we have $\bar{R}_T=[0,1]$.
    
    Given that we have established both $\bar{R}_T\subseteq \Delta^d$ and $\Delta^d \subseteq\bar{R}_T$, we conclude that $\bar{R}_T=\Delta^d$. Furthermore, we have that $\bar{R}_J=\bar{R}_T\cap \{y_k=0, k\in T\setminus J\}$ are faces of $\Delta^d$. Therefore, any such set is contractible.
\end{proof}

Next we prove the case in which $d+1\in J$, it turns out that in this case $\bar{R}_J$ is not a simplex so the proof will proceed by constructing a strong deformation retraction to $\bar{R}_{J\setminus d+1}$ which is a simplex by Lemma \ref{Simplex lemma}. Before jumping into the proof, we state a useful definition for labeling the different regions of the domain.

\begin{definition}
    The sign vector at a point $x$ is given by a map $\tau:\mathbb{R}^d\to \{W^-,W^+,E^-,E^+\}^d$ with components,

    \begin{align*}
        \tau(x)_j = 
        \begin{cases}
            W^- \text{ if } x_j<-\frac{1}{2d},
            \\
            W^+ \text{ if } -\frac{1}{2d}\leq x_j<0,
            \\
            E^- \text{ if } 0 \leq x_j < \frac{1}{2},
            \\
            E^+ \text{ if } \frac{1}{2}\leq x_j.
        \end{cases}
    \end{align*}
    Then the full sign vector is $\tau(x) \in \{W^-, W^+,E^-,E^+\}^d$, we also define $W=\{W^-, W^+\}$ and $E=\{E^-,E^+\}$. By abuse of notation we will, for example, write $t\in E^-$ to mean $t\in [0,\frac{1}{2})$.
\end{definition}

\begin{lemma}
\label{Lowering lemma}
    (Lowering lemma) Given $d+1\in J\subsetneq I$, the simplex $\bar{R}_{J\setminus \{d+1\}}$ (Lemma \ref{Simplex lemma}) is a strong deformation retract of $\bar{R}_J$. Therefore $\bar{R}_J$ is contractible.
\end{lemma}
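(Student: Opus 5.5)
The plan is to use the straight-line homotopy that pushes the $(d+1)$-st coordinate down to zero,
\[
H(y,t) = (y_T,\,(1-t)\,y_{d+1}), \qquad y\in\bar{R}_J,\ t\in[0,1],
\]
where $y_T$ collects the first $d$ coordinates. At $t=0$ this is the identity. At $t=1$ it lands in $\{y_{d+1}=0\}$, and it fixes every point that already has $y_{d+1}=0$. The coordinates outside $J$ stay zero throughout. So the only thing to verify is that $H(y,t)\in\bar{R}_J$ for all $t$, which means that every intermediate point is actually in the image of $\hat P$. The case $t=1$ then also requires $y_T\in\bar{R}_{J\setminus\{d+1\}}=\mathrm{conv}\{0,e_k:k\in J\setminus\{d+1\}\}$. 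This set is contractible by Lemma \ref{Simplex lemma}, so $\bar R_J$ will be contractible as well.

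Since $J\subsetneq I$, some $p_i$ vanishes with $i\neq d+1$. By fact 4 of Lemma \ref{facts}, this gives $S\geq\tfrac12$ and $x_k>0$ for every active $k\in T$. Write the preimage point as $x$. Fact 2 gives $y_{d+1}=\max(0,\,1-S-\sum_l h(x_l))$. The relevant coordinates of $x$ lie in the $E$ blocks or in $W$, and I would use the sign vector $\tau(x)$ to split into cases.

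The main step, and I expect the main obstacle, is a \emph{lowering move} in the domain. Given $x$ with $P(x)=y$, I want a path $x(t)$ along which the $T$-coordinates $p_k(x(t))$ stay fixed while $p_{d+1}(x(t))$ decreases monotonically to $0$. The idea is to push the negative coordinates of $x$ (those in $W^\pm$) toward $0$, or further negative, while compensating through the positive active coordinates.

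By fact 1, $p_k=1-S-g(x_k)$, and this depends only on $S$ and $x_k$. Moving a coordinate $x_j$ that is not active in $T$ changes $S$, and that change can be offset by adjusting the active $x_k\in E^-$, where $g$ has slope $-2$. Holding every $q_k$ fixed for active $k$ is a linear system on the cell of fixed $\tau$. Meanwhile $h$ has slope $-1$ or $0$ on $W$ and slope $0$ on $E$, so $q_{d+1}$ moves strictly. I would solve this system cell by cell, mirroring the explicit inversion in the proof of Lemma \ref{Simplex lemma}. The same $x_k=\tfrac12(y_k+S-\tfrac12)$ formula should give, for each target value $s\in[0,y_{d+1}]$, an explicit preimage of $(y_T,s)$.

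It is simpler to bypass paths altogether and prove directly that $(y_T,s)\in\hat P(X)$ for all $0\le s\le y_{d+1}$. If $\hat P(X)\cap\{y_i=0\ \forall i\notin J\}$ is shown to be ``downward closed'' in $y_{d+1}$ in this sense, then $H$ is well defined and continuous. It is continuous because it is the restriction of a linear map, so it is a strong deformation retraction. The case $y_{d+1}=0$ gives $y_T\in\bar R_{J\setminus\{d+1\}}$ by Lemma \ref{Simplex lemma}. The cases $d=1,2$ are likely to need separate explicit preimages, as in Lemma \ref{Simplex lemma}.
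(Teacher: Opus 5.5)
\paragraph{Gap: the downward-closedness is never proved.} Your top-level reduction is the paper's. The homotopy $H(y,t)=(y_T,(1-t)y_{d+1})$ is a strong deformation retraction onto $\bar R_{J\setminus\{d+1\}}$ once every fiber $\mathcal B=\{c\in[0,y_{d+1}]:(y_T,c)\in\hat P(X)\}$ equals $[0,y_{d+1}]$. That downward-closedness is the whole content of the lemma, and you only describe how you would attempt it (``I would solve this system cell by cell'', ``should give \dots an explicit preimage''). No preimage is written down and no move is checked.

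The sketched lowering move also does not work uniformly. Your compensation assumes the active coordinates lie in $E^-$, where $g'=-2$, but fact~4 only gives $x_k>0$. The paper excludes $x_l\ge\tfrac12$ by a separate triangle-inequality estimate, $q_k+q_{d+1}\le 1-2x_l$. That estimate needs a second active index $k\neq l$, so it is unavailable when $\alpha=1$.

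You also need two further ingredients:
\begin{itemize}
\item a negative, hence inactive, coordinate $j_0$ to absorb the change in $S$; it exists because otherwise $q_{d+1}=\tfrac12-S\le 0$;
\item an explicit direction: $\dot x_k=\tfrac12$ for $k\in A(x)$, $\dot x_{j_0}=\tfrac{\alpha-2}{2}$, all other coordinates fixed.
\end{itemize}
With this direction, $\dot S=1$, $\dot p_k=0$, and $\dot q_l=-1$ for inactive $l$. Moreover $\dot p_{d+1}=-1$ or $1-\alpha$ according as $x_{j_0}\in W^-$ or $W^+$. Your slopes for $h$ are wrong: $h'=0$ on $W^-$, $h'=2$ on $W^+$, and $h'=0$ on $E$, and these signs are exactly where the argument lives. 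This move strictly lowers $p_{d+1}$ only when $\alpha\ge 2$.

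\paragraph{The cases that need separate treatment.} The case split that matters is on $\alpha=|A(x)|$, not on $d$; for $d\le 2$ only $\alpha\le 1$ occurs.
\begin{itemize}
\item For $\alpha=1$, the move above stalls on $W^+$, and the active coordinate may sit in $E^+$, where $g$ is flat. The paper instead shows the fiber is exactly $0\le y_{d+1}\le\min(1-y_1,1-\tfrac1d)$. It realizes each such point by $x=(x_1,s,\dots,s)$ in the cell $E^-\times W^-$ or $E^+\times W^+$, depending on the sign of $y_{d+1}-(y_1-\tfrac1d)$.
\item For $\alpha=0$ there is no active coordinate to compensate with. The diagonal $x=-\lambda\sum_l e_l$, $\lambda\in[\tfrac{1}{2d},\tfrac{3}{2d}]$, realizes every $(0,c)$ with $c\in[0,1]$.
\end{itemize}

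\paragraph{Missing globalization step.} A local move only shows that $\mathcal B$ extends slightly below each of its positive points. To get the full interval you still need that $\mathcal B$ is closed, which follows from compactness of $\hat P(X)$. You then need an infimum or connectedness argument to conclude $\mathcal B=[0,y_{d+1}]$. The paper supplies both; your proposal omits them, so as written it outlines the right strategy but does not prove the lemma.
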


\begin{proof}
    Let $y=(y_T,y_{d+1})\in \bar{R}_J$ with $y_{d+1}>0$. Our goal will be to show that for every $y_{d+1}$ the set $\mathcal{B} = \{c\in[0,y_{d+1}]: (y_T,c) \in \hat{P}(X)\}$ is a lower set of $[0,y_{d+1}]$ which implies that $\mathcal{B} = [0,y_{d+1}]$ and the straight-line homotopy $H(y,t)=(y_T,(1-t)y_{d+1})$ maps into $\bar{R}_J$. Such a homotopy has the properties of a strong deformation retract. 
    \begin{enumerate}
        \item $H(y,t)$ is continuous, as can be seen from its formula.
        \item $H(y,t)\in \bar{R}_J$, since the zero pattern outside $J$ never changes and $(1-t)y_{d+1}\in[0,y_{d+1}]=\mathcal{B}$.
        \item It is a strong deformation retraction because $H(y,0) = y$, $H(y,1) = (y_T,0)\in \bar{R}_{J\setminus \{d+1\}}$ and $H((y_T,0),t)=(y_T,0)$ for all $t$ (this is the "strong" condition).
    \end{enumerate}

    We again have to split the proof into two cases depending on how many coordinates of $y_T$ are positive.
    
    \textbf{Case 1: $2\leq \alpha\leq d-1$.} Let $0<c\in \mathcal{B}$ and $x$ be some preimage of $(y_T,c)$ so that $p_{d+1}(x)=c >0$. By facts 3 and 4 from Lemma \ref{facts}, the active coordinates $k\in A(x)$ are positive or $x_k\in E$. Furthermore, if $x_l\geq\frac{1}{2}$ for some $l\in A(x)$ and $l\neq k \in A(x)$ (such a $k$ exists when $\alpha\geq2$), then we can write the equations $q_k(x) = 1 - \|x-a_k\|_1$ and  $q_{d+1}(x) = 1 - \|x-a_{d+1}\|_1$. Adding the two gives,
    \begin{equation*}
        q_{d+1}(x)+q_k(x) = 2-(\|x-a_k\|_1+\|x-a_{d+1}\|_1).
    \end{equation*}
    The sum of norms can be split by the index-ordered pairs. For the $l$-th entry we get $|x_l|+|x_l+\frac{1}{2d}|=2x_l+\frac{1}{2d}$. For the others we can use the triangle inequality to write $|x_j-a_{k,j}|+|x_j-a_{d+1,j}| \geq |a_{k,j} - a_{d+1,j}|$. Summing all of these terms together gives 
    \begin{equation*}
        q_{d+1}(x)+q_k(x) \leq 2 - (1-\frac{1}{2d}+2x_l+\frac{1}{2d}) = 1-2x_l\leq 0. 
    \end{equation*}
    Therefore the assumption that $x_l\geq\frac{1}{2}$ implies that at least one of the two positive coordinates are negative. This is a contradiction, so $x_l < \frac{1}{2}$ and $\tau(x)_l=E^-$ have to hold. This allows us to write an affine expression for all active coordinates $k\in A(x)$,
    \begin{equation*}
        q_k(x) = \frac{1}{2}-S + 2x_k.
    \end{equation*}
    Notice that the flow we wish to define keeps every coordinate fixed except $y_{d+1}$. Therefore, the active coordinates should not change, the inactive ones can change as long as they don't become active and the $d+1$ coordinate has to move so that $y_{d+1}$ decreases. We take a single inactive coordinate with $\tau(x)_{j_0}\in W$, which exists because otherwise we get $x>0$ and $p_{d+1}(x) = 1- \sum(x_l+\frac{1}{2d}) = 1- S-\frac{1}{2} = \frac{1}{2}-S$. From fact 4, this contradicts $p_{d+1}(x) > 0$. A set of flow equations satisfying the conditions above is given by,

    \begin{align}
        &\dot{x}_k = \frac{1}{2}, \quad \dot{p}_k = 1-\dot{S} \text{ for } k\in A(x),\\
        &\dot{x}_{j_0} =  \frac{\alpha-2}{2}, \\
        &\dot{x}_j = 0 \text{ for } j\in T\setminus A(x), j\neq j_0.
    \end{align}
    Observe what this does to the derivative $\dot{S} = \sum_{k\in T}\text{sign}(x_l)\dot{x}_k=\frac{\alpha}{2}-\frac{\alpha-2}{2} = 1$, where we used $x_k>0$ for $k\in A(x)$, $x_{j_0}<0$ and $\dot{x}_l=0$ for $l\neq j_0$ and $l\notin A(x)$. This implies that $\dot{p}_k = 0$ for $k\in A(x)$. Additionally for $l\notin A(x)$, we have $\dot{q}_{l} = -\dot{S}=-1$ since $g(x_{j_0}) = \frac{1}{2}$, which means that $p_{l}$ including $p_{j_0}$ stays $0$ for small $t$. What remains to be shown is that $p_{d+1}(x)$ decreases along the flow. There are two cases to separate depending on whether $\tau(x)_{j_0} =W^-$ or $W^+$. 

    By applying fact 2 we have $\dot{p}_{d+1} = -\dot{S}-\sum_{l\in T}\dot{h}(x_l)$. One can see that the only contributing term is $j_0$ from the derivatives of $h(x_l)$,

    \begin{align*}
        \frac{\partial h(x_l)}{\partial x_l} = 
        \begin{cases}
            0 \text{ for } \tau(x)_l = W^-,
            \\
            2 \text{ for } \tau(x)_l = W^+,
            \\
            0 \text{ for } \tau(x)_l \in E.
        \end{cases}
    \end{align*}
    Then if $\tau(x)_{j_0} =W^-$, we have $\dot{h}(x_{j_0})=\frac{\partial h(x_{j_0})}{\partial x_{j_0}}\frac{\partial x_{j_0}}{\partial t} = 0$ and $\dot{p}_{d+1} = -\dot{S}=-1$. If $\tau(x)_{j_0} = W^+$, then we have $\dot{h}(x_{j_0})=2\dot{x}_{j_0}$ and $\dot{p}_{d+1} = -\dot{S}-2\dot{x}_{j_0} = 1-\alpha$. In both cases this derivative is negative so $y_{d+1}=p_{d+1}$ decreases along the flow. Using this property we can now prove that $\mathcal{B} = [0,y_{d+1}]$.

    First off, $\hat{P}(x)$ vanishes outside of $\Omega$ so it is compact, which implies that $\mathcal{B}$ is a closed subset. Let $m^* = \inf\{c: [c,y_{d+1}]\subseteq \mathcal{B}\}$, by the closed property we have $[m^*,y_{d+1}]\subseteq \mathcal{B}$. Assume that $m^*>0$ and apply the flow at a preimage of $(y_T,m^*)$ which moves the point to $(y_T,m^*-\delta)$ and therefore $[m^*-\delta,y_{d+1}]\subseteq \mathcal{B}$.  This is a contradiction, so $m^*=0$ and $\mathcal{B}=[0,y_{d+1}]$ and $H(y,t) = (y_T,(1-t)y_{d+1})$ maps into $\bar{R}_J$ and is a strong deformation retract onto the simplex $\bar{R}_{J\setminus\{d+1\}}$.

    \textbf{Case 2:} $\alpha\leq 1$. Using the previous flow equations does not work for $\alpha=1$ as then we cannot derive that active coordinates have $x_k\leq\frac{1}{2}$ and get $\dot{p}_{d+1} = 0$ in the $W^+$ regime. This setting only has two equations to keep track of so we opt for a more explicit approach where we show that the set $\bar{R}_{1,d+1}\setminus \bar{R}_{d+1} = \{y_1e_1+y_{d+1}e_{d+1}:y_1>0 \text{ and } 0\leq y_{d+1}\leq \min(1-y_1,1-\tfrac{1}{d})\}$ is a lower set with respect to $y_{d+1}$. For the bounds, take $p_1(x)>0$, by the triangle inequality we have the sum, 
    \begin{align*}
        q_1(x)+q_{d+1}(x) &= 2 - (\|x-a_1\|_1+\|x-a_{d+1}\|_1) \\
        &\leq 2-\|a_1-a_{d+1}\|_1=1,
    \end{align*}

    hence $p_{d+1}(x) \leq1-p_1(x)$. Then using facts 2 and 4 we can also write $q_{d+1}(x) = 1-S - \sum h(x_k) = 1-S-\tfrac{1}{2d}-\sum_{l\neq 1}h(x_l)$. From the expression of $h$ we have $h(x_l)\geq -\tfrac{1}{2d}$ and $S\geq \frac{1}{2}$ we have, 
    \begin{align*}
        q_{d+1}(x) &\leq 1-S-\tfrac{1}{2d}+\tfrac{d-1}{2d}
        \\
        &\leq \tfrac{1}{2}+\tfrac{d-2}{2d}=1-\tfrac{1}{d},
    \end{align*}
    therefore $p_{d+1}(x) = \max(0,q_{d+1})\leq 1- \tfrac{1}{d}$
    Then to realize the points satisfying both bounds we take the same approach as in Lemma \ref{Simplex lemma}. Specifically, we solve $(p_1,p_{d+1})$ for a point $x=(x_1,s,...,s)$. When $\tau(x)_1 = E^-$ and $s\in W^-$, we get the solutions,
    \begin{align*}
        x_1 = \frac{1}{2}(y_1-y_{d+1}+1-\tfrac{1}{d}), && s = -\frac{2-\tfrac{1}{d}-y_1-y_{d+1}}{2(d-1)}
    \end{align*}
    One can check these solutions apply when $y_{d+1}\geq y_1-\tfrac{1}{d}$. Otherwise, we consider $\tau(x)_1 = E^+$ and $s\in W^+$ which gives the solutions,
    \begin{align*}
        x_1 = 1-\frac{1}{2}(y_1+y_{d+1}), && s = -\frac{1-y_1+y_{d+1}}{2(d-1)}.
    \end{align*}
    As one can check these solutions are consistent with $y_{d+1} \leq y_1-\frac{1}{d}$. This argument applies for any $\bar{R}_{k,d+1}$ due to the symmetry of the construction.

    Finally, for $\alpha=0$ we have $\bar{R}_{d+1} = \{y_{d+1}e_{d+1}: 0\leq y_{d+1} \leq 1\}$. This is a lower set wrt. $y_{d+1}$ realized by $x = -\lambda\sum e_l$ with $\lambda \in [\frac{1}{2d},\frac{1}{2d}+\frac{1}{d}]$. Every $q_k(x) = \frac{1}{2}-d\lambda\leq 0$ and $p_{d+1}(x) = 1- d(\lambda-\frac{1}{2d})$ where at $\lambda=\frac{1}{2d}$ we have $p_{d+1}(x) = 1$ and at $\lambda = \frac{1}{2d}+\frac{1}{d}$, we have $p_{d+1}(x)=0$. Since we know that $p_{d+1}(x)\leq 1$ this completes the proof.

    Every $y\in\bar R_J$ has $0\leq\alpha\leq d-1$, so by the cases above
$\mathcal S=[0,y_{d+1}]$ for every $y\in\bar R_J$, i.e.\ $\bar R_J$ is a lower set with
respect to $y_{d+1}$. Hence items~1--3 hold, and $H$ is a strong deformation retraction of
$\bar R_J$ onto $\bar R_{J\setminus\{d+1\}}$.

    To conclude, for any $y\in \bar{R}_J$, we have $\mathcal{B}=[0,y_{d+1}]$ for all cases $0\leq\alpha\leq d-1$. This implies $\bar{R}_{J}$ is a lower set wrt. $y_{d+1}$ and all three properties of $H(y,t)$ hold. This implies that $H(y,t)$ is a strong deformation retract onto the simplex $\bar{R}_{J\setminus \{d+1\}}$.
\end{proof}

The last lemma needed to establish the good cover property of $C$ is the contractibility of the core $\bar{V}_I$ and its intersections $\bar{V}_I\cap \bar{R}_J$. We split this into two lemmas.

\begin{lemma}
    \label{Contractible core lemma}
    (Contractible core lemma) The core $\bar{V}_I=\hat{P}(\bar{U}_I)$ is contractible.
\end{lemma}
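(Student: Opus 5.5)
The plan is to show that $\bar{V}_I=\hat{P}(\bar{U}_I)$ is the continuous image of a compact convex set under a map that is affine on $\bar{U}_I$, so it is itself a convex polytope and hence contractible. On $\bar{U}_I$ every $q_k\geq 0$, so $\hat{P}=(q_1,\dots,q_{d+1})$ there and the ReLUs play no role. Each $q_k(x)=1-\|x-a_k\|_1$ is only piecewise affine, so $\hat{P}|_{\bar{U}_I}$ is not globally affine. The first task is therefore to pin down which linear pieces of the $L_1$ norms can occur inside $\bar{U}_I$.

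\textbf{Step 1: locate $\bar{U}_I$.} I would use Lemma \ref{facts}. Suppose $x\in\bar{U}_I$ had some coordinate $x_l\geq \tfrac12$. Pairing $q_{d+1}$ with $q_k$ for some $k\neq l$ (possible when $d\geq 2$) and repeating the triangle-inequality computation from Case 1 of the Lowering lemma gives $q_{d+1}+q_k\leq 1-2x_l\leq 0$. So both can vanish only at the boundary $x_l=\tfrac12$, and otherwise $x_l<\tfrac12$. By the symmetric argument with $h$, pairing $q_{d+1}$ against $q_l$ should exclude $x_l<-\tfrac1{2d}$. This places every coordinate in $W^+\cup E^-$, the strip $[-\tfrac1{2d},\tfrac12]$, where $g$ and $h$ are affine except at the kink $t=0$.

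\textbf{Step 2: handle the remaining kink at $0$.} In the strip, $q_k(x)=\tfrac12-S+2x_k$ and $q_{d+1}=1-S-\sum_l h(x_l)$, where $S=\|x\|_1$ still has a kink at each $x_l=0$. The cleanest route I would try is a radial deformation instead of convexity of the image. $\bar{U}_I$ is an intersection of closed $L_1$ balls, so it is convex and contains $0$, with $\hat{P}(0)=(\tfrac12,\dots,\tfrac12)$. I would show that the straight-line homotopy $G(y,t)=(1-t)y+t\cdot\tfrac12\mathbf{1}$ stays inside $\bar{V}_I$. Since $\bar{V}_I=\{q(x):x\in\bar{U}_I\}$, this reduces to a concavity statement. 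Each $q_k$ is concave, so $q(sx)\geq (1-s)q(0)+s\,q(x)$ componentwise. Hence the image of the segment $[0,x]$ dominates the segment $[q(0),q(x)]$, though it does not equal it.

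\textbf{Step 3: the main obstacle.} Step 2 gives domination, not membership. The hard step is showing that $\bar{V}_I$ is star-shaped about $\tfrac12\mathbf{1}$, i.e.\ that every point of the segment is actually attained. My first attempt would solve explicitly, as in the Simplex lemma, for a preimage in each sign orthant, using that $q$ is affine on each orthant piece of the strip. For this I would check that the affine pieces glue consistently and that the system $q(x)=y$ on an orthant is an invertible affine system, with a $d\times(d+1)$ Jacobian of rank $d$. Then $\bar{V}_I$ is a finite union of $d$-dimensional polytopes, each the affine image of an orthant cell, and each contains $\tfrac12\mathbf{1}=q(0)$, since $0$ lies in the closure of every cell. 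A finite union of convex sets sharing a common point is star-shaped about that point, hence contractible. So the real work reduces to the cell decomposition of Step 1 and the verification that every cell of $\bar{U}_I$ contains $0$ in its closure. That holds because the cells are the intersections of the convex set $\bar{U}_I$ with closed orthants, and $0$ lies in every closed orthant.
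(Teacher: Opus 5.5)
\textbf{There is a genuine gap, and it is in Step 1.} The bound $x_l\ge-\frac{1}{2d}$ is false for $d\ge 2$. The set $\bar U_I$ is the whole closed ball $\{\|x\|_1\le\frac{1}{2}\}$ (Step 1 of the paper's proof), so it contains $-\frac{1}{2}e_1$. For $d=2$, even the interior point $(-0.3,0)$ has $q_1=q_2=0.2$ and $q_3=0.7$. Your ``symmetric'' pairing only gives $q_l+q_{d+1}\le 1+\frac{1}{d}+2x_l$, which is nonpositive only once $x_l\le-\frac{1}{2}-\frac{1}{2d}$.

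Hence $h$ keeps its kink at $-\frac{1}{2d}$ inside $\bar U_I$. There $p_{d+1}=\frac{1}{2}-S+2\sum_l\min(x_l^-,\frac{1}{2d})$, which is not affine on orthant cells. The cells on which $\hat P$ is affine include ones with some $x_l\le-\frac{1}{2d}$, and their closures miss $0$.

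The conclusion you were aiming for is itself false: $\bar V_I$ is not star-shaped about $\hat P(0)=\frac{1}{2}\mathbf{1}$. Take $d=2$, where $\hat P(-\frac{1}{2},0)=(0,0,\frac{1}{2})$. On $\bar U_I$ one has $\min_i y_i=\frac{1}{2}-S$. So a preimage of $(\frac{t}{2},\frac{t}{2},\frac{1}{2})$ would need $S=\frac{1-t}{2}$, $x_1,x_2\le 0$, and $\min(|x_1|,\frac{1}{4})+\min(|x_2|,\frac{1}{4})=\frac{1-t}{4}$. This is impossible for $0<t<1$. The whole open segment from $(0,0,\frac{1}{2})$ to $\frac{1}{2}\mathbf{1}$ misses $\bar V_I$, so your homotopy $G$ leaves $\bar V_I$ immediately, and no finer cell bookkeeping can rescue it.

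\textbf{The missing idea} is to contract in the domain, push the contraction down through $\hat P$, and aim at $a_{d+1}$ rather than $0$. Along $x(t)=(1-t)x+ta_{d+1}$ one gets $p_{d+1}(x(t))=1-(1-t)(1-y_{d+1})$. Since $\min_i y_i=\frac{1}{2}-S$, the value $y=\hat P(x)$ determines $S$, $x^+$ and $Z=\{l:x_l\le 0\}$. On $Z$ the coordinates stay nonpositive, with $|x_l(t)|=(1-t)|x_l|+\frac{t}{2d}$. So $\hat P(x(t))$ depends only on $\hat P(x)$.

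The map $\hat P\times\mathrm{id}:\bar U_I\times[0,1]\to\bar V_I\times[0,1]$ is a continuous surjection from a compact space onto a Hausdorff space, hence a quotient map. The induced homotopy on $\bar V_I$ is therefore continuous, and it contracts $\bar V_I$ to $e_{d+1}$.

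The choice of centre matters: scaling toward $0$ does not descend, because $\sum_l\min((1-t)x_l^-,\frac{1}{2d})$ depends on how $x^-$ is distributed. For $d=3$, the points $(-0.3,-0.2,0)$ and $(-0.25,-0.25,0)$ both map to $(0,0,0,\frac{2}{3})$, but their images differ at $t=0.3$.
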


\begin{proof}

    \textbf{Step 1:} $\bar{U}_I=\{S\leq \frac{1}{2}\}$.
    
    Consider the open set $U_I$ on which all functions $p_k$ are active. We have $U_I=\{S< \frac{1}{2}\}$. If $S< \frac{1}{2}$, then by the triangle inequality we have $\|x-a_k\|_1< S+\|a_k\|_1< 1$, which implies $p_k(x) = 1- \|x-a_k\|>0$. Since $\bar{U}_I$ is the closure of $U_I$, we get $\bar{U}_I=\{S\leq\frac{1}{2}\}$. 
    Conversely, suppose that all $p_k(x)>0$ and $S\geq \frac{1}{2}$. Then from fact 3 we get $x_k>0$. This implies that $h(x_k)=\frac{1}{2d}$, so $p_{d+1}(x) < \frac{1}{2}-\sum h(x_k) = 0$. This is a contradiction so $\bar{U} = \{S\leq \frac{1}{2}\}$. 
    
    It is convenient to rewrite the equations again given that $x_k\leq \frac{1}{2}$ (which follows from $S\leq \frac{1}{2}$). \footnote{Notice that since all functions are active we can freely write $p_k(x) = q_k(x)$.} Define $t^\pm=\max(\pm t,0)$, then we get the expressions,

    \begin{align}
    \label{eq: other formulation}
        p_k(x) = \frac{1}{2}-S+2x^+_k \quad (k\in T), \quad p_{d+1}(x) = \frac{1}{2}-S + 2\sum \min(x_l^-,\frac{1}{2d}).
    \end{align}

    \textbf{Step 2:} $\bar{U}_I$ contracts to $a_{d+1}$ and this factors through $\hat{P}$.
    
    We now proceed to prove the contractibility of $\bar{V}_I$ by first constructing a contraction $x(t)$ in the domain $\bar{U}_I$ and then showing that $(x,t)\to \hat{P}(x(t))$ factors through the image map $\hat{P}\times \mathrm{id}$, which is equivalent to requiring that $\hat{P}(x) = \hat{P}(x')$ implies $\hat{P}(x(t)) = \hat{P}(x'(t))$.

    Take the contraction to $a_{d+1}$ given by $x(t) = (1-t)x+ta_{d+1}$. This is a contraction of $\bar{U}_I$ since $a_{d+1}\in \bar{U}_I$, that set is convex and makes $p_{d+1}(x(t))$ linear in $t$ from the equality,
    \begin{align*}
        p_{d+1}(x(t)) &= 1- \|(1-t)x+ta_{d+1}-a_{d+1}\|_1 \\
        &=1-(1-t)\|x-a_{d+1}\|_1\\
        &=1-(1-t)(1-y_{d+1})=p_{d+1}(x'(t)).
    \end{align*}

    This property does not hold for the other functions $p_k(x)=y_k$, so instead we show that they are determined by $y$. Use the equality $\hat{P}(x) = \hat{P}(x')$ and equation \ref{eq: other formulation} to express,
    \begin{align*}
        p_k(x) &= \frac{1}{2}-\|x\|_1+2x_k^+\\
        &=\frac{1}{2}-\|x'\|_1+2{x'}_k^+=p_k(x')
    \end{align*}
    By rearranging the individual terms we get $p_k(x)-(\frac{1}{2}-S)=2x_k^+\geq 0$ and $p_{d+1}(x) -(\frac{1}{2}-S) = 2\sum \min(x^-_l,\frac{1}{2d})\geq 0$. This implies that at least one $p_i(x) = \frac{1}{2}-S$, since either $x_k\leq0$ for some $k$ and thus $p_k(x)-(\frac{1}{2}-S) = 0$ implying $p_k(x) = \frac{1}{2}-S$ or $x>0$ implying $x_l^-=0$ and $p_{d+1}(x) = \frac{1}{2}-S$ and $\min_i(y_i)\leq \frac{1}{2}-S$. We also have $\min_i(y_i)\geq \frac{1}{2}-S$ from the fact that the formulas in \ref{eq: other formulation} add a positive term to $\frac{1}{2}-S$. Therefore, $S = \frac{1}{2}-\min_i(y_i)$ only depends on $y$, so $S=\|x\|_1=\|x'\|_1$ and $x_k^+={x'}_k^+$.

    It remains to show that $\|x(t)\|_1=\|x'(t)\|_1$ and $x(t)^+=x'(t)^+$. We can define the set where $x$ and $x'$ have non-positive values as $Z = \{l:x_l\leq 0\} = \{l: {x'}_l\leq 0\}$. Off this set we have $x_l(t)=x'_l(t)$. On this set we have, $\max((1-t)x-\frac{t}{2d},0)=x(t)^+=x'(t)^+=0$ and $|x_l(t)| = (1-t)|x_l|+\frac{t}{2d}$. In other words, the different preimages of $y$ only differ only in coordinates in $Z$. Then summing over $N$ gives $\sum_{l\in Z}|x_l(t)| = (1-t)\sum_{l\in Z}|x_l|+\frac{t|Z|}{2d} = \sum_{l\in Z}|x_l'(t)|$, with the last equality coming from the fact that $Z$ is the same for both points. Thus, we get that off $Z$ the coordinates agree and on $Z$ the sums of the absolute values agree, this implies $\|x(t)\|_1=\|x'(t)\|_1$ and $x(t)^+=x'(t)^+$. From this we see that $\hat{P}(x) = \hat{P}(x')$ implies $\hat{P}(x(t)) = \hat{P}(x'(t))$.

    \textbf{Step 3:} $\bar{V}_I$ is contractible.
    
    From the previous step we concluded that $\hat{P}(x(t))= y(t)$ is independent of the choice of the starting point $x(0)\in \bar{U}_I$ with $P(x(0))=y$. Another way to state this property is to say that the map $F(x,t)=\hat{P}(x(t))$ factors through $\hat{P}\times \mathrm{id}$, meaning that the following diagram commutes,
    \[
    \begin{tikzcd}
      \bar U_I\times[0,1] \arrow[r, "F"] \arrow[d, "\hat{P}\times\mathrm{id}"'] & \bar V_I \\
      \bar V_I\times[0,1] \arrow[ur, dashed, "\Phi"']
    \end{tikzcd}
    \]
    $\hat{P}$ is continuous, $\bar{U}\times[0,1]$ is compact and $P\times \mathrm{id}$ is a continuous surjection (simply because $\bar{V}_I=\hat{P}(\bar{U}_I)$) onto a Hausdorff space. Thus it is closed and a quotient map. $F(x,t) = \hat{P}(x(t))= \Phi\circ P\times\mathrm{id}$ is also continuous due to the continuity of $\hat{P}$, so $\Phi$ is continuous. This allows us to write the retraction on the image space $\bar{V}\times [0,1]$ as $\Phi(\hat{P}(x),t)$. We then have $\Phi(\hat{P}(x),0) = \hat{P}(x)$ because $x(0)=x$ and $\Phi(\hat{P}(x),1) = e_{d+1}$ because $x(1) = a_{d+1}$ and $\hat{P}(a_{d+1}) = e_{d+1}$. Thus $\Phi$ is a homotopy starting from the identity and ending at the constant map with value $e_{d+1}$. Therefore the identity is null-homotopic and $\bar{V}_I$ is contractible.
\end{proof}

The final step to establishing that $C$ is a good cover is to confirm that intersections of the type $\bar{R}_J\cap\bar{V}_I$ are contractible.

\begin{lemma}
    \label{Contractible intersection lemma}
    (Contractible intersection lemma) For $\emptyset \neq J\subsetneq I$, the intersections $\bar{R}_J\cap\bar{V}_I$ are contractible.
\end{lemma}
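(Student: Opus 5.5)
The plan is to describe the preimage of $\bar{R}_J\cap\bar{V}_I$ inside $\bar{U}_I$ explicitly, and then compute the image directly from the formulas \eqref{eq: other formulation}. By Step 1 of Lemma \ref{Contractible core lemma}, $\bar{U}_I=\{S\leq\tfrac12\}$, and on this set $y_i-(\tfrac12-S)\geq 0$ for all $i$. So a point of $\bar{V}_I$ has a vanishing coordinate only if $S=\tfrac12$. Hence, for $J\subsetneq I$,
\begin{equation*}
\bar{R}_J\cap\bar{V}_I=\hat{P}(D_J),\qquad D_J=\{x: S=\tfrac12,\ 2x_k^+=0\ (k\in T\setminus J),\ \textstyle\sum_l\min(x_l^-,\tfrac{1}{2d})=0 \text{ if } d+1\notin J\}.
\end{equation*}
On $D_J$ the map is $y_k=2x_k^+$ for $k\in T$ and $y_{d+1}=2\sum_l\min(x_l^-,\tfrac1{2d})$. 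I will split into two cases according to whether $d+1\in J$.

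\textbf{Case $d+1\notin J$.} Here the condition forces $x\geq 0$, together with $x_k=0$ for $k\in T\setminus J$ and $\sum_{k\in J}x_k=\tfrac12$. Thus $D_J$ is a simplex on which $\hat{P}$ is the linear map $y_k=2x_k$. The image is therefore the face $\{y_J\geq0,\ \sum_{J}y_k=1\}$ of the simplex from Lemma \ref{Simplex lemma}. It is nonempty because $J\neq\emptyset$, and it is convex, hence contractible.

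\textbf{Case $d+1\in J$.} Write $K=J\cap T$. A point $x\in D_J$ has nonnegative part supported on $K$ with $\sum_{k\in K}x_k^+=\tfrac12\sum_K y_k=:\pi$, and negative part of total mass $\tfrac12-\pi$, which may sit on any coordinates that are not positive. For a fixed $y_K$ the achievable values of $y_{d+1}$ form the image of a connected set (an affine-slice of the parameter space) under a continuous function, hence a closed interval $[\ell(y_K),u(y_K)]$. This gives
\begin{equation*}
\bar{R}_J\cap\bar{V}_I=\{(y_K,c): y_K\geq 0,\ \textstyle\sum_K y_k\leq 1,\ \ell(y_K)\leq c\leq u(y_K)\}.
\end{equation*}
I will compute $\ell$ and $u$ explicitly by an extremal distribution of the negative mass $N=\tfrac12-\pi$ over the $m$ available nonpositive coordinates.
\begin{itemize}
\item The quantity $\sum_l\min(x_l^-,\tfrac1{2d})$ is concave in the distribution of negative mass, so it is maximized by spreading the mass evenly. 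This gives $u=2\min(N,\tfrac{m}{2d})$.
\item It is minimized by concentrating the mass on a single coordinate, which gives $\ell=2\min(N,\tfrac1{2d})$.
\end{itemize}
Here $m=d-|\{k\in K: x_k>0\}|$, and $m$ jumps as coordinates of $y_K$ vanish. I must check that $\ell$ and $u$ are nevertheless continuous in $y_K$, or at least that the set is closed with $\ell$ continuous. Given this, the homotopy $H((y_K,c),t)=(y_K,(1-t)c+t\,\ell(y_K))$ deformation retracts the set onto the graph of $\ell$, which is homeomorphic to the simplex $\{y_K\geq0,\sum_K y_k\leq 1\}$ and so contractible. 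Alternatively, the set is star-shaped with respect to the point $(0,\ldots,0,1)$ (the image of $x=-\tfrac12 e_1$ has $c=2\cdot\tfrac1{2d}$; this needs checking), and I would use whichever is cleaner.

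\textbf{Expected obstacle.} The main difficulty is the boundary behaviour when some $y_k\to0$ for $k\in K$. A coordinate with $y_k=0$ may itself become a carrier of negative mass, so the number $m$ of available coordinates changes and $u$ could jump. I would first show that only $\ell$ matters for the retraction, since $\ell=\min(2N,\tfrac1d)$ depends only on $\sum_K y_k$ and is manifestly continuous. I would then verify that the fibre over each $y_K$ is an interval containing $\ell(y_K)$, so that the straight-line homotopy stays inside the set. If convexity of the fibres fails, my fallback is the factoring trick of Lemma \ref{Contractible core lemma}: contract $D_J$ within itself by moving all negative mass onto one fixed coordinate, check that this contraction factors through $\hat{P}$, and conclude via the quotient-map argument.
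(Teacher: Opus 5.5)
Your argument is correct and is essentially the paper's: the case $d+1\notin J$ is the same face-of-a-simplex computation. For $d+1\in J$ the paper likewise retracts onto the graph of $z^*(y)=\min(1-\sum y_l,\tfrac1d)$, which is your $\ell$, by the same straight-line homotopy in the last coordinate. The only difference is how the vertical segment is shown to lie in the set: the paper uses an explicit path that slides all negative mass onto one $j_0\in T\setminus J$, while you use convexity of the $x$-fibre together with concavity of $\sum_l\min(x_l^-,\tfrac1{2d})$.

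Drop the star-shaped alternative, though, because it is false. For $d=2$, $J=\{1,3\}$, the set in the $(y_1,y_3)$-plane is the graph of $\min(1-y_1,\tfrac12)$ together with the segment $\{0\}\times[\tfrac12,1]$. This is not star-shaped about $(0,1)$. The same example shows the jump of $u$ you worried about is real, but harmless, since only $\ell$ enters the retraction.
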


\begin{proof}
    It is again easier if we split the proof depending on whether $d+1 \in J$.

    \textbf{Step 1:} $d+1\notin J$.

    On $\bar{V}_I$ we have $S\leq \frac{1}{2}$ and  on $\bar{R}_J$, with $J=T$, we have $S\geq \frac{1}{2}$. Therefore, on their intersection we have $S=\frac{1}{2}$ and $p_{d+1}(x)=0$. In addition, fact 3 implies that $S = \sum x_k=\frac{1}{2}$ and we can write the equations on this intersection as,

    \begin{align*}
        p_k(x) = 2x_k, \quad p_{d+1}(x) = 0.
    \end{align*}

    From here we see that $\sum p_k(x) = 2\sum x_k$, which by the fact that $\sum x_k =\frac{1}{2}$ implies that $\sum_k p_k(x) = 1$. Furthermore, $p_k(a_k) = e_k$ and $p_l(a_k) = 0$ when $l\neq k$, meaning that the image under $\hat{P}$ is the simplex. Choosing $J\subset T$, simply restricts the simplex to some lower-dimensional face. In any case these intersections are convex and contractible simplices.

    \textbf{Step 2:} $d+1\in J$.

    Denote the base of $\bar{R}_J\cap\bar{V}_I$ , excluding $p_{d+1}$, by $\Delta_J = \{y_k\geq 0: \sum\limits_{k\in J\setminus \{d+1\}} y_k\leq 1\}$. $\Gamma \subset \bar{R}_J\cap\bar{V}_I$ is a graph over this base in which, for $y\in \Delta_J$, points in the intersection are given by $(y,y_{d+1})$. Our goal is to show that $\bar{R}_J\cap\bar{V}_I$ deformation retracts to such a graph $\Gamma\simeq \Delta_J$.  We can write the expression for $p_{d+1}(x) = y_{d+1}$ as,

    \begin{align*}
        p_{d+1}(x) &= \frac{1}{2}-\sum h(x_l) \\
        &= \frac{1}{2}-\frac{d}{2d} + 2\sum\limits \min(x_l^-,\tfrac{1}{2d}) \\
        &= 2\sum \min(x_l^-,\tfrac{1}{2d}).
    \end{align*}

    We can decompose every $x\in \Sigma_J = \{x: x_l\leq 0 \text{ for } l\notin J, \|x\|_1 = \frac{1}{2}\}$ into its positive and negative parts. Then for some index $j_0\in T\setminus J$ (which exists since $J\neq I$), we can write a new point as $\hat{x} = x^+-\|x^-\|_1 e_{j_0}$. Since the $l_1$ norm splits componentwise, the norm of this new point is still $\|\hat{x}\|_1 = \|x^+\|_1+\|x^-\|_1 = \frac{1}{2}$ which means that $\hat{x} \in \Sigma_J$. The positive parts of $x$ and $\hat{x}$ are the same so we get $p_k(x) = p_k(\hat{x}) = 2x_k$. For $p_{d+1}$ the positive parts contribute $0$ and we have $p_{d+1}(\hat{x})= 2\min(\|x^-\|_1,\frac{1}{2d})$. The positive sum $\|x^+\|_1$ can also be written as $\frac{1}{2}\sum y_l=\|x^+\|_1$, which gives $\|x^-\|_1 = \frac{1}{2}-\|x^+\|_1=\frac{1}{2}(1-\sum y_l)$. So $p_{d+1}(\hat{x})$ can be realized through the image coordinates $y$ as $p_{d+1}(\hat{x}) = \min(1-\sum y_l,\frac{1}{d})$. This last expression will be useful later so we use a new function $z^*(y) = \min(1-\sum y_l,\frac{1}{d})$ and define $\Gamma$ as,

    \begin{equation*}
        \Gamma = \{(y,z^*(y)):y\in \Delta_J\}.
    \end{equation*}
    
    Since they share signs, $x$ and $\hat{x}$ are in the same closed orthant on which $\|x\|_1$ is linear. Therefore, the segment between them stays in a convex set . Let this segment be parametrized as $x(s) = (1-s)x+s\hat{x}$. The image of the positive maps on this path is constant by,
    
    \begin{align*}
        p_k(x(s)) &= 2x_k^+(s) = 2\max((1-s)x_k+s\hat{x}_k,0)\\
        &=2(1-s)x_k+2sx_k = 2(1-s)\hat{x}_k+2s\hat{x}_k \\&= 2\hat{x}{_k} = p_k(\hat{x}(s)).
    \end{align*}
    The remaining negative coordinates map to $0$ and for $p_{d+1}$ we have a continuous function,
    \begin{align*}
        p_{d+1}(x(s)) = 2\sum \min(x_l(s)^-,\tfrac{1}{2d}).
    \end{align*}

    By substitution, we can check that $p_{d+1}(x(0)) = p_{d+1}(x)$ and $p_{d+1}(x(1)) = z^*(y)$. Given the continuity of $p_{d+1}(x(s))$, any value in the interval $[z^*(y),p_{d+1}(x)]$ is realized for some $s$. By the fact that there is at least one negative coordinate we have $0<2\min(x_l^-,\frac{1}{2d})$ for some $l$, which implies that both extremes of this interval are positive. Therefore, $\hat{P}(x(s))$ stays in $\bar{R}_J\cap\bar{V}_I$.

    Finally, we can define a homotopy $G((y,z),s) = (y,(1-s)z+sz^*(y))$ on the image. It is continuous, $G((y,z),0) = (y,z)$ is the identity and since $\Delta_J$ is fixed throughout, $G((y,z),1)= (y,z^*(y))$ maps into a graph $\Gamma$ over $\Delta_J$. When restricted to $z^*(y)\in\Gamma$, the homotopy keeps $\Gamma$ fixed, since $G((y,z^*(y)),s) = (y,(1-s)z^*(y)+sz^*(y))=(y,z^*(y))$. Since the base is a convex set, hence contractible, it is enough to establish that  $\Gamma\simeq \Delta_J$. This can be done through the projection $(y,z)\to y$, which is continuous and has a continuous inverse given by $y\to (y,z^*(y))$. Therefore, the homotopy $G$ is a deformation retract of $\bar{V}_I\cap \bar{R}_J$ to $\Gamma$ which is homeomorphic to a convex set $\Delta_J$ and is therefore contractible. This concludes the proof.
\end{proof}

With this last lemma, we have now checked all possible intersections of the cover $C=(\bigcup\limits_{i \in I}\bar{R}_{I\setminus i})\bigcup\bar{V}_I$ and we can now prove Theorem \ref{Main theorem: good cover}.

\begin{proof}
    The proof consists in showing that every intersection of the cover is contractible, which we have done in the previous four lemmas. Here we simply list them.

    \begin{enumerate}
        \item Intersections of the type $\bar{R}_J$ with $d+1\notin J$ are simplices by Lemma \ref{Simplex lemma}.
        \item Intersections of the type $\bar{R}_J$ with $d+1\in J\neq I$ are (strong) deformation retract to simplices by Lemma \ref{Lowering lemma}.
        \item The core $\bar{V}_I = \hat{P}(\bar{U}_I)$ is contractible by Lemma \ref{Contractible core lemma}.
        \item Intersections of the type $\bar{V}_I\cap\bar{R}_J$, with $\emptyset \neq J\subsetneq I$ contract to simplices when $d+1\notin J$ or to a graph over simplices when $d+1\in J$ by Lemma \ref{Contractible intersection lemma}.
    \end{enumerate}
    Given that all this list exhausts all possible intersections, $C$ is a good cover of $\hat{P}(X)$.
\end{proof}

The fact that $C$ is a good cover allows us to apply the nerve theorem, which leads to our final theorem.

\begin{theorem}
    $N(C)$ is homotopy equivalent to the sphere $S^d$.
\end{theorem}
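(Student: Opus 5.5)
The plan is to identify the nerve $N(C)$ combinatorially and then recognise it as the boundary of a simplex. The cover has $d+2$ members: the $d+1$ facets $\bar{R}_{I\setminus i}$ for $i\in I$, and the core $\bar{V}_I$. I will index these vertices by $\{1,\dots,d+1,*\}$, with $*$ standing for the core. A subset $\sigma$ of these vertices is a simplex of $N(C)$ exactly when the corresponding intersection is nonempty. So the work is to show that every proper subset of $\{1,\dots,d+1,*\}$ has nonempty intersection, while the full set does not. Then $N(C)=\partial\Delta^{d+1}$, which is homeomorphic to $S^d$.

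The empty full intersection comes first. A point in all facets has every $y_i=0$. A point in $\bar{V}_I$ has $S\le\tfrac12$ by Step 1 of Lemma \ref{Contractible core lemma}. There, the formulas \eqref{eq: other formulation} give $\min_i y_i=\tfrac12-S\ge0$, together with additional positive terms.

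That alone is not quite enough. The point $x$ with $S=\tfrac12$ and all $x_l\le 0$ past the cutoff could in principle give $y_{d+1}=0$ as well, so I need to rule this out. I will use the explicit identity $y_{d+1}=2\sum_l\min(x_l^-,\tfrac1{2d})$ on $S=\tfrac12$ from Lemma \ref{Contractible intersection lemma}, together with $\sum_k y_k=2\|x^+\|_1$. If all $y=0$, then $x^+=0$, so $\|x^-\|_1=\tfrac12$. That forces some $x_l^->0$, hence $y_{d+1}>0$, a contradiction. So the full intersection is empty, which also matches the paper's assertion.

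Next come the proper subsets. A subset of facets alone corresponds to $\bar{R}_J$ with $J\neq\emptyset$ or $J=\emptyset$. Every such set contains $0=\hat{P}(\partial\Omega)$, so it is nonempty. A subset containing $*$ together with facets indexed by $I\setminus J$ is $\bar{V}_I\cap\bar{R}_J$, and this is nonempty precisely when $J\neq\emptyset$. That is guaranteed because the subset is proper, so not every facet is included. For nonemptiness I will point to Lemma \ref{Contractible intersection lemma}, which exhibits explicit points: the vertex $e_k=\hat{P}(a_k)$ for $k\in J\cap T$, and for $J=\{d+1\}$ the point $(0,z^*(0))=\tfrac1d e_{d+1}$, realised by $\hat{x}=-\tfrac12 e_{j_0}$. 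The core alone is nonempty since it contains $e_{d+1}$.

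This shows $N(C)$ is the boundary of the $(d+1)$-simplex on $d+2$ vertices, and its geometric realisation is homeomorphic to $S^d$. Combined with Theorem \ref{Main theorem: good cover} and the nerve theorem, this yields $\hat{P}(X)\simeq N(C)\simeq S^d$. There is one technical point: the cover is by closed sets. I will invoke the closed-cover version of the nerve theorem (Borsuk), which applies because the members are compact subsets of $\mathbb{R}^{d+1}$ with contractible intersections, and the $\bar{R}$'s and core are compact polyhedra or images thereof. This is legitimate for finite closed covers of triangulable spaces, and $\hat{P}(X)$ is a compact polyhedron as the PL image of a polytope.

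The main obstacle is the emptiness of the full intersection, specifically excluding a core point lying on the $S=\tfrac12$ boundary with every coordinate zero. The argument via $\|x^-\|_1=\tfrac12$ above is what I would try first.
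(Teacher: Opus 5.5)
Your proposal is correct and follows the paper's approach. You take the $d+2$ cover members as vertices and show that every proper subset has nonempty intersection (the origin lies in every $\bar R_J$, and points such as $e_k=\hat P(a_k)$ witness the intersections with the core), while the full intersection is empty, so $N(C)=\partial\Delta^{d+1}\cong S^d$; your argument that $\mathbf{0}\notin\bar V_I$ (on $S=\tfrac12$, $x^+=0$ forces $\|x^-\|_1=\tfrac12$ and hence $y_{d+1}>0$) fills in a step the paper only asserts.
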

\begin{proof}
    This simply follows by looking at the intersection pattern of all pieces of the cover. The conditions $\bar{R}_K\cap\bar{R}_J = \bar{R}_{K\cap J}$ along with the intersections $\bar{V}_I\cap \bar{R}_J \neq \emptyset$ imply that the nerve $N(C)$ contains every proper subset of the $d+2$ possible vertices. The only special intersections are the one including all closed strata $\bigcap (R_{I\setminus i}) = \textbf{0}$ and its intersection with the core $\textbf{0}\cap \bar{V}_I = \emptyset$. Therefore, the nerve has $d+2$ vertices which realize all intersections except for the $d+1$ face. This is equivalent to taking the boundary $\partial\Delta^{d+1}$, which is homotopy equivalent to $S^d$.
\end{proof}


\end{document}